\newcommand{\curl}{{\rm curl}}
\newcommand{\dive}{{\rm div}}
\newcommand{\grad}{{\rm grad}}
\newcommand{\Nedelec}{N\'{e}d\'{e}lec }
\newcommand{\scalar}[2]{\langle #1, #2\rangle} 
\renewcommand{\vec}[1]{\ensuremath{\bm{#1}}}
\newcommand{\bigA}{\mathcal{A}}
\newcommand{\bigAaux}{\mathcal{A}^{\textrm{\tiny aux}}}
\newcommand{\massM}{M}
\newcommand{\matG}{G}
\newcommand{\matK}{K}
\newcommand{\matD}{D}
\newcommand{\matZ}{Z}
\newcommand{\preconW}{\mathcal{W}}
\newcommand{\preconWaux}{\mathcal{W}^{\textrm{\tiny aux}}}
\newcommand{\preconBF}{\mathcal{X}}
\newcommand{\prediag}{\mathcal{D}}
\newcommand{\prelow}{\mathcal{L}}
\newcommand{\preupp}{\mathcal{U}}
\newcommand{\preQ}{Q}
\newcommand{\schurS}{S}
\title{Robust Solvers for Maxwell's Equations with Dissipative Boundary Conditions
\thanks{Submitted \today.}}
\author{J. H. Adler\thanks{Department of Mathematics, Tufts
    University, Medford, MA 02155 (james.adler@tufts.edu,
    \hbox{xiaozhe.hu@tufts.edu}).  The work of J.~Adler was supported in part by the National Science 
Foundation under grant DMS-1216972} \and X. Hu\footnotemark[2] \and
  L. T. Zikatanov\thanks{Department of Mathematics, The Pennsylvania
    State University, University Park, PA 16802 (\hbox{ludmil@psu.edu}). The work of L.~Zikatanov was supported in part by the National Science 
Foundation under grant DMS-1418843 and DMS-1522615.}}
\begin{document}

\maketitle

\begin{abstract}
  In this paper, we design robust and efficient linear solvers for the
  numerical approximation of solutions to Maxwell's equations
  with dissipative boundary conditions.  We consider a structure-preserving finite-element approximation with standard
  N\'{e}d\'{e}lec--Raviart--Thomas elements in space and a
  Crank--Nicolson scheme in time to approximate the electric and
  magnetic fields.

  We focus on two types of block preconditioners.  The first type is
  based on the well-posedness results of the discrete problem.  The
  second uses an exact block factorization of the
  linear system, for which the structure-preserving
  discretization yields sparse Schur complements. We prove
  robustness and optimality of
  these block preconditioners, and provide supporting numerical tests.
\end{abstract}

\begin{keywords}
Maxwell's equations, finite-element method, structure-preserving block
preconditioners, dissipative boundary conditions.
\end{keywords}

\begin{AMS}
65M60, 35Q61, 65Z05, 65F08, 65F10
\end{AMS}
 
\pagestyle{myheadings}
\thispagestyle{plain}
\markboth{\sc Adler, Hu, Zikatanov}{\sc Robust Solvers for ADS}

\section{Introduction}
In this paper, we consider Maxwell's system of partial differential
equations (PDEs) with dissipative boundary conditions, also known as
impedance boundary conditions.  Let $\mathcal{O}$ be a bounded,
connected domain, $\mathcal{O}\subset \mathbb{R}^{3}$ and consider
Maxwell's equations in the exterior of $\overline{\mathcal{O}}$, that
is, in $\mathbb{R}^3\setminus \overline{\mathcal{O}}$:
\begin{eqnarray}
\vec{B}_t + \curl\, \vec{E} &=& 0,  \label{eq:BcE0}\\
\varepsilon \vec{E}_t - \curl\, \mu^{-1} \vec{B} &=& -\vec{j}, \label{eq:EcB0}\\ 
\dive\, \varepsilon \vec{E}& =& 0\label{eq:divD0},\\
\dive\, \vec{B}& =& 0 \label{eq:divB0}.
\end{eqnarray}
Here, $\varepsilon$ is the permittivity of the medium, $\mu$ is the
permeability, and $\vec{j}$ is the known current density of the system
satisfying $\dive\, \vec{j}=0$.  We assume that the computational domain, $\Omega=\mathcal{S}\setminus \overline{\mathcal{O}}$, is bounded,
where $\mathcal{S}$ is a ball in $\mathbb{R}^{3}$ with sufficiently
large radius that contains $\mathcal{O}$. 
The system
\eqref{eq:BcE0}-\eqref{eq:divB0} is subject to a dissipative boundary
condition:
\begin{equation}\label{eq:boundary-condition}
(1+\gamma )\vec{E}_{\textrm{tan}}=  -\vec{n}\wedge\vec{B}, \quad \mbox{on} \ \Gamma_i. 
\end{equation} 
In this setting, $ \Gamma_{i}=\partial\Omega\cap \partial \mathcal{O}$
and
$\vec{F}_\textrm{tan} = \vec{F}-\langle
\vec{F},\vec{n}\rangle\vec{n}$,
for a vector-valued function $\vec{F}$.  On the rest of the boundary,
$\Gamma_{o}=\partial\Omega\setminus \Gamma_i$, we have
essential (Dirichlet-type) boundary conditions. 
For symmetric hyperbolic systems, such problems have been investigated
for several decades starting with the work of
Majda~\cite{1974MajdaA-aa,1976MajdaA-aa} and later in the works by
Colombini, Petkov, and Rauch on Maxwell's
equations~\cite{2011ColombiniF_PetkovV_RauchJ-aa,2014ColombiniF_PetkovV_RauchJ-aa,2013PetkovV-aa}.  We note that the
boundary conditions considered in the model problem pertain to
obstacles more general than a perfect conductor. Of course, all of the
constructions in this paper also apply to a perfectly-conducting
obstacle (i.e., for
the case of essential boundary conditions on the entire boundary).

In the following, we develop efficient solvers based on block
factorizations of structure-preserving discretizations of Maxwell's
equations, \eqref{eq:BcE0}--\eqref{eq:divB0}, with dissipative
boundary conditions, \eqref{eq:boundary-condition}.  The goal is to
efficiently solve the full time-dependent problem, uniformly with
respect to physical and discretization parameters.  The
finite-element discretization that we use is described in
\cite{2013AdlerJ_PetkovV_ZikatanovL-aa} with further details
included below. A serious bottleneck in the simulations based on
this discretization, however, was the computational work needed for
the solution of the resulting linear systems at each time step.  As
shown later, both theoretically and via numerical experiments, this
issue is resolved by efficient and robust preconditioning
techniques proposed here.

Block preconditioners are often used for coupled systems, especially
those of saddle-point type (see e.g.,
\cite{Benzi.M;Golub.G2005,Benzi.M;Golub.G;Liesen.J2005,Elman.H;Silvester.D;Wathen.A.2005a,klawonn1998block,
  Loghin.D;Wathen.A2004,Mardal.K;Winther.R2004,Rusten.T;Winther.R1992,Schoeberl2007,Vassilevski.P2008}). Such
preconditioners usually decouple the problems at the preconditioning
stage and convert complicated systems into several simpler problems
for which efficient solvers are either known or easier to
construct. In general, there are two approaches to construct these
types of preconditioners: \emph{analytic} and \emph{algebraic}. The
\emph{analytic} approach constructs the preconditioners by studying
the mapping properties of the differential operators between
appropriate Sobolev spaces.
Prominent examples in this direction are the works of K.~Mardal and
R.~Winther~\cite{Mardal.K;Winther.R2010,Mardal.K;Winther.R2004}, who
developed a class of robust preconditioners for parameter-dependent
problems, such as convection-dominated systems and the time-dependent
Stokes equations.  On the other hand, the \emph{algebraic} approach
aims at constructing preconditioners based on a block decomposition
(or factorization) of the discretized equations. These factorizations
can be very general, but they inevitably involve systems with Schur
complements, which in turn require special approximations.  Examples
of applications include magnetohydrodynamics, where such approximate block
factorization preconditioners have been developed
\cite{Cyr.E;Shadid.J;Tuminaro.R.2013a,2006ElmanH_HowleV_ShadidJ_ShuttleworthR_TuminaroR-aa,Phillips.E;Elman.H;Cyr.E;Shadid.J;Pawlowski.R.2014a}.

In this paper, we present two types of block preconditioners based on these two approaches. For the
analytical approach, we prove the well-posedness of the discrete
problem in appropriate Sobolev spaces equipped with weighted norms.
This allows us to
achieve robustness of the linear solvers with respect to the physical and discretization
parameters of the system.  We then apply the framework from 
\cite{Loghin.D;Wathen.A2004} and \cite{Mardal.K;Winther.R2010} and
construct a family of block diagonal preconditioners, which are
isomorphisms between the same pair of Sobolev spaces. The
action of any such preconditioner corresponds to a decoupled
problem and is computed efficiently.

For the algebraic approach, we derive an exact block factorization of the
resulting linear systems.  In general, this may lead to an inefficient
method, because it requires computing the action of the inverses of
the corresponding Schur complements.  These, typically, are full
matrices of size comparable to the size of the original problem. In
the case of the discretized Maxwell's equations, however, we deal with
special linear systems resulting from finite-element spaces that are
part of a deRham complex.  As a result, we are able to prove that the
Schur complements needed to compute the action of the algebraic
preconditioner are sparse and this action is carried out with an
optimal computational cost.


The paper is organized as follows.  In Section~\ref{sec:preliminary},
we introduce notation and definitions for Maxwell's equations.  The
structure-preserving discretization is then reviewed in
Section~\ref{sec:fem}, and in Section~\ref{sec:solver}, we introduce and
analyze the analytic and algebraic block preconditioners.  Finally, in
Section~\ref{sec:numerics}, we present numerical experiments
illustrating the effectiveness and robustness of the proposed
preconditioners. Concluding remarks and a discussion of future work are given in
Section~\ref{sec:conclude}.

\section{Preliminaries} \label{sec:preliminary}
We use $(\cdot, \cdot)$ and $\| \cdot \|$ to denote the standard $L^2(\Omega)$
inner product and norm on a domain, $\Omega$,
\[
\scalar{u}{v} = \int_{\Omega} u \cdot v \, \mathrm{d}x \text{     and
   } \ \| u \| = \sqrt{\scalar{u}{u}}.
\]
With a slight abuse of notation, we use $L^2(\Omega)$ to denote both
the scalar and vector $L^2$ space. Additionally, we assume that both
$\varepsilon$ and $\mu$ are positive continuous functions only
depending on $x \in \Omega$, inducing weighted $L^2$ norms,
\[
\| \vec{u} \|_{\varepsilon}^2 = \scalar{\varepsilon \vec{u}}{\vec{u}} \ \text{and} \ \| \vec{u} \|_{\mu^{-1}}^2 = \scalar{\mu^{-1} \vec{u}}{ \vec{u}}.
\] 

Next, given a Lipschitz
domain, $\Omega$, and a differential operator, $\mathfrak{D}$, we use a standard notation for the following spaces
\[
H({\mathfrak{D}})= \{v\in L^2(\Omega), {\mathfrak{D}} v\in
L^2(\Omega)\},
\]
with the associated graph norm, $\|u\|_{\mathfrak D}^2 = \|u\|^2 +
\|\mathfrak{D} u\|^2$ (e.g. $H(\grad) = H^1(\Omega)$).  Then, we introduce the following spaces (the first one for scalar functions and the rest for vector-valued functions): 
\begin{eqnarray*}
  H_0(\grad) = H^1_0(\Omega) & = & \{ v\in H^1(\Omega)\quad\mbox{such
    that}\quad v\big|_{\partial\Omega}=0\},\\
  H_{\textrm{imp}}(\curl) & = & \{\vec{v}\in H(\curl)\quad\mbox{such
    that}\quad \vec{v}\wedge \vec{n}\big|_{\Gamma_o}=0\},\\
 H_{\textrm{imp}}(\dive) & = & \{ \vec{v}\in H(\dive)\quad\mbox{such that}\quad \scalar{\vec{v}}{\vec{n}}\big|_{\Gamma_o}=0\}, \\
 H^0_{\textrm{imp}}(\dive) & = & \{ \vec{v} \in H_{\textrm{imp}}(\dive), \quad \mbox{such that}\quad \dive \ \vec{v} = 0 \}.
\end{eqnarray*}
More details on the construction of these spaces is found in
\cite{2013AdlerJ_PetkovV_ZikatanovL-aa}.  Finally, for the time-dependent problem considered here, the relevant function spaces
are 
\begin{eqnarray*}
H_0(\grad;t) & = & \{ v(t,\cdot)\in H_0^1(\Omega) \quad\mbox{for all $t\ge 0$}\},\\
H_{\textrm{imp}}(\curl;t) & = & \{ \vec{v}(t,\cdot)\in
H_{\textrm{imp}}(\curl), \quad\mbox{for all $t\ge 0$}\},\\
H_{\textrm{imp}}(\dive;t) & = & \{ \vec{v}(t,\cdot)\in H_{\textrm{imp}}(\dive), \quad\mbox{for all $t\ge 0$}\}.
\end{eqnarray*}

With this notation, following \cite{2013AdlerJ_PetkovV_ZikatanovL-aa},
we introduce an auxiliary variable, $p$, associated with the
divergence-free constraint of $\vec{E}$ and get the following
variational problem:
~\\\\
Find $(\vec{B},\vec{E},p)
\in H_\textrm{imp}(\dive;t)\times H_{\textrm{imp}}(\curl;t)\times H_0(\grad;t)$,
such that for all 
$(\vec{C},\vec{F},q) \in H_\textrm{imp}(\dive)\times
H_{\textrm{imp}}(\curl)\times H_0^1(\Omega)$ and for all $t>0$,
\begin{eqnarray}
\scalar{ \mu^{-1} \vec{B}_t}{\vec{C}} + \scalar{ \mu^{-1}\curl\,
  \vec{E}}{\vec{C}} &= 0,\label{mixed-form-1}\\
\scalar{ \varepsilon \vec{E}_t}{\vec{F}} + \scalar{ \varepsilon\, \grad  \ p}{\vec{F}}
- \scalar{ \mu^{-1} \vec{B}}{\curl\, \vec{F}} + (1+\gamma)\int_{\Gamma_{i}}\scalar{\vec{E}_\textrm{tan}}{\vec{F}_\textrm{tan}}
&= - (\vec{j},\vec{F}) ,\label{mixed-form-2}\\
\scalar{p_t}{q} - \scalar{\varepsilon \vec{E}}{\grad \ q} &= 0.\label{mixed-form-3}
\end{eqnarray}
At $t=0$, the following initial conditions are needed,
\begin{equation}\label{eq:initial-condition}
\vec{E}(0,\vec{x})=\vec{E}_0(\vec{x}),\quad
\vec{B}(0,\vec{x})=\vec{B}_0(\vec{x}),\quad
 p(0,\vec{x})=0.
\end{equation}
In \cite{2013AdlerJ_PetkovV_ZikatanovL-aa}, it was shown that the
above variational problem preserves the divergence of the magnetic
field, $\bm{B}$, strongly and the divergence of the electric field,
$\bm{E}$, weakly, if the initial conditions and right-hand side
satisfy certain conditions.  We discuss this further in the
following section.

\section{Finite-Element Discretization} \label{sec:fem}
Going forward, we consider a structure-preserving discretization of
\eqref{mixed-form-1}-\eqref{mixed-form-3} and discuss the
well-posedness of the linear system obtained at each time step.  Such
analysis is crucial for developing the block preconditioners discussed
in Section \ref{sec:solver}.   

For the temporal discretization, we adopt a Crank-Nicolson scheme.
Crank-Nicholson is an example of a second-order symplectic time-stepping method,
which is capable of preserving the
discrete energy of the system.  These types of schemes are important
for guaranteeing that the asymptotic behavior is captured. If needed, 
higher-order symplectic methods can be used
\cite{1985FengK-aa,1986FengK-aa,1990FengK_WuH_QinM-aa,2010FengK_QinM-aa}.

Spatially, we consider standard finite-element spaces.  For the magnetic field $\vec{B}$, we use the Raviart-Thomas element denoted by $H_{h,\textrm{imp}}(\dive) \subset H_{\textrm{imp}}(\dive)$.  For the electric field $\vec{E}$, we use the \Nedelec element denoted by $H_{h,\textrm{imp}}(\curl) \subset H_{\textrm{imp}}(\curl)$.  Finally, we use standard Lagrange finite elements for the auxiliary unknown, $p$, and denote the space by $H_{h,0}(\grad) \subset H_0(\grad)$.  These choices of finite-element spaces satisfy the following exact sequence, which results in a structure-preserving discretization: 
\begin{equation}\label{def:exact}
\begin{CD}
H_{h,0}(\grad) @>{\grad}>> H_{h,\textrm{imp}}(\curl) @>{\curl}>> H_{h,\textrm{imp}}(\dive) @>{\dive}>> L^2_h,
\end{CD}
\end{equation}
where $L^2_h$ is the corresponding piecewise polynomial subspace of $L^2(\Omega)$.

Thus, the full discretization of Maxwell's equation is: 
~\\\\Find $(\vec{B}_h, \vec{E}_h, p_h) \in H_{h, \textrm{imp}}(\dive) \times H_{h, \textrm{imp}}(\curl) \times H_{h,0}(\grad)$, such that for all $(\vec{C}_h, \vec{F}_h, q_h) \in H_{h, \textrm{imp}}(\dive) \times H_{h, \textrm{imp}}(\curl) \times H_{h,0}(\grad)$,
\begin{eqnarray}
\scalar{ \mu^{-1} \frac{ \vec{B}_h^n -\vec{B}_h^{n-1} }{\tau} }{\vec{C}_h} + \scalar{ \mu^{-1}\curl\, \frac{ \vec{E}^{n}_h + \vec{E}_h^{n-1}  }{2}}{\vec{C}_h} &= 0,\label{discrete-mixed-form-1}\\
\scalar{ \varepsilon \frac{ \vec{E}_h^n - \vec{E}_h^{n-1} }{\tau}}{\vec{F}_h} + \scalar{ \varepsilon \, \frac{\grad  \, p_h^n + \grad \, p_h^{n-1}}{2}}{\vec{F}_h}
- \scalar{ \mu^{-1} \frac{\vec{B}_h^n + \vec{B}_h^{n-1} }{2} }{\curl\, \vec{F}_h} + \nonumber \\
 \qquad \qquad (1+\gamma)\int_{\Gamma_{i}}\scalar{\frac{ \vec{E}^n_{h,\textrm{tan}} + \vec{E}^{n-1}_{h,\textrm{tan}}}{2} }{\vec{F}_{h,\textrm{tan}}} 
= - ( \frac{\vec{j}^n + \vec{j}^{n-1}}{2},\vec{F}_h) ,\label{discrete-mixed-form-2}\\
\scalar{\frac{p_h^n - p_h^{n-1}}{\tau}}{q_h} - \scalar{\varepsilon \frac{ \vec{E}_h^n + \vec{E}_h^{n-1} }{2} }{\grad \, q_h} &= 0,\label{discrete-mixed-form-3}
\end{eqnarray}
with suitable initial conditions,
\begin{equation}
\vec{B}_h^0 = \Pi_{h}^{\dive} \vec{B}_0, \quad \vec{E}_h^0 = \Pi_h^{\curl} \vec{E}_0, \quad p_h^0 = 0.
\end{equation}
Here, the superscripts indicate the time step and $\Pi_h^{\dive}$ and
$\Pi_h^{\curl}$ are the canonical interpolations for $H_{h,
  \textrm{imp}}(\dive)$ and $H_{h, \textrm{imp}}(\curl)$.  This
discretization is structure-preserving, since it preserves the divergence of $\bm{B}$ strongly and the divergence of $\bm{E}$ weakly at the discrete level (as long as the initial conditions and right-hand side are discretized properly).  We refer to \cite{2013AdlerJ_PetkovV_ZikatanovL-aa} for details.

\subsection{Well-posedness}
For simplicity, we drop the subscript $h$ and superscript $n$, and
move all terms involving the previous time step to the right-hand
side.  Thus, the full discretization is stated as follows:
~\\\\ 
Find $(\vec{B}, \vec{E}, p) \in H_{h, \textrm{imp}}(\dive) \times H_{h, \textrm{imp}}(\curl) \times H_{h,0}(\grad)$, such that for all $(\vec{C}, \vec{F}, q) \in H_{h, \textrm{imp}}(\dive) \times H_{h, \textrm{imp}}(\curl) \times H_{h,0}(\grad)$,
\begin{eqnarray}
&& \qquad  \frac{ 2}{\tau}  \scalar{ \mu^{-1} \vec{B} }{\vec{C}} + \scalar{ \mu^{-1}\curl\, \vec{E}}{\vec{C}} = (\vec{g}_{\vec{B}},\vec{C}),\label{discrete-mixed-form-B}\\
&& \qquad \frac{2}{\tau} \scalar{ \varepsilon \vec{E}}{\vec{F}} + \scalar{ \varepsilon \, \grad  \, p}{\vec{F}}
- \scalar{ \mu^{-1} \vec{B} }{\curl\, \vec{F}} + (1+\gamma)\int_{\Gamma_{i}}\scalar{  \vec{E}_{\textrm{tan}}}{\vec{F}_{\textrm{tan}}} = ( \vec{g}_{\vec{E}} ,\vec{F}) ,\label{discrete-mixed-form-E}\\
&& \qquad \frac{2}{\tau}\scalar{p}{q} - \scalar{\varepsilon \vec{E} }{\grad \, q} = (g_p,q),\label{discrete-mixed-form-p}
\end{eqnarray}
where the dual functionals on the right-hand side are defined as
\begin{align*}
(\vec{g}_{\vec{B}},\vec{C}) &= \frac{2}{\tau}\scalar{\mu^{-1} \vec{B}_h^{n-1}}{\vec{C}} - \scalar{\mu^{-1} \curl \
\vec{E}_h^{n-1}}{\vec{C}},\\
(\vec{g}_{\vec{E}},\vec{F}) &= \frac{2}{\tau} \scalar{\varepsilon\vec{E}_h^{n-1}}{\vec{F}} - \scalar{\varepsilon \,
\grad \, p_h^{n-1}}{\vec{F}} + \scalar{\mu^{-1}
\vec{B}_h^{n-1}}{\curl\,\vec{F}} \\
&- (1+\gamma) \int_{\Gamma_{i}}\scalar{ 
\vec{E}^{n-1}_{h,\textrm{tan}}}{\vec{F}_{\textrm{tan}}} - \scalar{\vec{j}^n+\vec{j}^{n-1}}{\vec{F}},\\
(g_p,q) &= \frac{2}{\tau} \scalar{p_h^{n-1}}{q} + \scalar{\varepsilon
  \vec{E}_h^{n-1}}{\grad \, q}.
\end{align*}  

Following the ideas in \cite{Hu.K;Ma.Y;Xu.J.2014a} and
\cite{Hu.K;Hu.X;Xu.J;Ma.Y.2014a}, in order to analyze the
well-posedness of
\eqref{discrete-mixed-form-B}-\eqref{discrete-mixed-form-p}, we
analyze the following auxiliary problem first:
~\\\\
Find
$(\vec{B}, \vec{E}, p) \in H_{h, \textrm{imp}}(\dive) \times H_{h,
  \textrm{imp}}(\curl) \times H_{h,0}(\grad)$,
such that for all
$(\vec{C}, \vec{F}, q) \in H_{h, \textrm{imp}}(\dive) \times H_{h,
  \textrm{imp}}(\curl) \times H_{h,0}(\grad)$,
 \begin{eqnarray}
&&\qquad  \frac{ 2}{\tau}  \scalar{ \mu^{-1} \vec{B} }{\vec{C}} +
\scalar{\mu^{-1} \curl\, \vec{E}}{\vec{C}} + \scalar{ \dive\ \vec{B}}{\dive \ \vec{C}}= (\vec{g}_{\vec{B}},\vec{C}),\label{aux-mixed-form-B}\\
&& \qquad \frac{2}{\tau} \scalar{ \varepsilon \vec{E}}{\vec{F}} + \scalar{ \varepsilon \,  \grad \, p}{\vec{F}}
- \scalar{ \mu^{-1} \vec{B} }{\curl\, \vec{F}} + (1+\gamma)\int_{\Gamma_{i}}\scalar{  \vec{E}_{\textrm{tan}}}{\vec{F}_{\textrm{tan}}} = ( \vec{g}_{\vec{E}} ,\vec{F}) ,\label{aux-mixed-form-E}\\
&& \qquad \frac{2}{\tau}\scalar{p}{q} - \scalar{\varepsilon \vec{E} }{\grad \, q} = (g_p,q).\label{aux-mixed-form-p}
\end{eqnarray}
Since $\dive\,\vec{B} = 0$, the mixed formulations \eqref{discrete-mixed-form-B}-\eqref{discrete-mixed-form-p} and \eqref{aux-mixed-form-B}-\eqref{aux-mixed-form-p} are equivalent if 
$\vec{g}_{\vec{B}} \in \left ( H^0_{h,\textrm{imp}}(\dive)\right )'$.  Thus, the
well-posedness of
\eqref{discrete-mixed-form-B}-\eqref{discrete-mixed-form-p} follows
directly from the well-posedness of
\eqref{aux-mixed-form-B}-\eqref{aux-mixed-form-p}.  

Introducing the following bilinear form,
\begin{align}\label{def:bilinear-L}
a(\vec{B}, \vec{E}, p; \vec{C}, \vec{F}, q ) &:= \frac{ 2}{\tau}
\scalar{ \mu^{-1} \vec{B} }{\vec{C}} + \scalar{ \mu^{-1}\curl\, \vec{E}}{\vec{C}} + \scalar{ \dive\, \vec{B}}{\dive \, \vec{C}}  \nonumber \\ 
& \ + \frac{2}{\tau} \scalar{ \varepsilon \vec{E}}{\vec{F}} + \scalar{ \varepsilon \,  \grad \, p}{\vec{F}} - \scalar{ \mu^{-1} \vec{B} }{\curl\, \vec{F}} + (1+\gamma)\scalar{  \vec{E}}{\vec{F}}_{\Gamma_i}  \\
& \ + \frac{2}{\tau}\scalar{p}{q} - \scalar{\varepsilon \vec{E} }{\grad \, q}, \nonumber
\end{align}
and the following weighted norms,
\begin{align}
\| \vec{B} \|_{\dive}^2 &:= \frac{2}{\tau} \| \vec{B} \|^2_{\mu^{-1}} + \| \dive \, \vec{B} \|^2, \\
\| \vec{E} \|_{\curl}^2 & := \frac{2}{\tau} \| \vec{E} \|^2_{\varepsilon} + \frac{\tau}{2} \| \curl \, \vec{E} \|^2_{\mu^{-1}} + (1 + \gamma) \| \vec{E} \|^2_{\Gamma_i}, \\
\| p \|^2_{\grad} &:= \frac{2}{\tau} \| p \|^2 + \frac{\tau}{2} \| \grad \, p \|^2_{\varepsilon},
\end{align}
we have the following theorem, which shows that \eqref{aux-mixed-form-B}-\eqref{aux-mixed-form-p} is well-posed. 

\begin{theorem}\label{thm:well-posed-aux}
Let $\vec{V}_h: =  H_{h, \textrm{imp}}(\dive) \times H_{h, \textrm{imp}}(\curl) \times H_{h,0}(\grad)$.
The bilinear form defined by \eqref{def:bilinear-L} satisfies the following inf-sup condition, 
\begin{equation} \label{ine:inf-sup}
\sup_{ 0 \neq (\vec{C}, \vec{F}, q) \in \vec{V}_h }\frac{a(\vec{B}, \vec{E}, p; \vec{C}, \vec{F}, q ) }{ \left( \| \vec{C} \|^2_{\dive} + \| \vec{F} \|^2_{\curl} + \| q \|^2_{\grad}  \right)^{1/2} } \geq \frac{1}{4} \left(  \| \vec{B} \|^2_{\dive} + \| \vec{E} \|^2_{\curl} + \| p \|^2_{\grad}  \right)^{1/2},
\end{equation}
and is bounded, 
\begin{equation}\label{ine:bounded}
a(\vec{B}, \vec{E}, p; \vec{C}, \vec{F}, q ) \leq C \left(  \| \vec{B} \|^2_{\dive} + \| \vec{E} \|^2_{\curl} + \| p \|^2_{\grad}  \right)^{1/2}   \left( \| \vec{C} \|^2_{\dive} + \| \vec{F} \|^2_{\curl} + \| q \|^2_{\grad}  \right)^{1/2}.
\end{equation}
Thus, the auxiliary problem, \eqref{aux-mixed-form-B}-\eqref{aux-mixed-form-p}, is well-posed. 
\end{theorem}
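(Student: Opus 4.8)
The plan is to obtain well-posedness from the finite-dimensional inf-sup theory: since $\vec{V}_h$ is finite dimensional and the trial and test spaces coincide, once \eqref{ine:inf-sup} is proved the operator associated with $a(\cdot;\cdot)$ is injective, hence bijective, and \eqref{ine:bounded} then controls the norm of its inverse. So the task reduces to establishing \eqref{ine:bounded} and \eqref{ine:inf-sup}.

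Boundedness is the routine part. I would estimate each of the nine terms in \eqref{def:bilinear-L} separately, using the Cauchy--Schwarz inequality in the relevant weighted $L^2$ inner product (or in the $L^2(\Gamma_i)$ inner product for the boundary term), and then observe that every factor so produced is dominated by one of the three composite norms, precisely because the weights $2/\tau$, $\tau/2$ and $1+\gamma$ built into those norms are the same ones appearing in the bilinear form; for example $\tfrac{2}{\tau}\scalar{\mu^{-1}\vec{B}}{\vec{C}}\le\|\vec{B}\|_{\dive}\|\vec{C}\|_{\dive}$, $\scalar{\mu^{-1}\curl\vec{E}}{\vec{C}}\le\|\vec{E}\|_{\curl}\|\vec{C}\|_{\dive}$, and $(1+\gamma)\scalar{\vec{E}}{\vec{F}}_{\Gamma_i}\le\|\vec{E}\|_{\curl}\|\vec{F}\|_{\curl}$. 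Summing these gives \eqref{ine:bounded} with an explicit constant $C$ independent of $\tau$, $\varepsilon$, $\mu$, $\gamma$ and the mesh.

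The heart of the proof is the inf-sup bound, and the idea is to choose, for a given $(\vec{B},\vec{E},p)\in\vec{V}_h$, the test triple
\[
(\vec{C},\vec{F},q)=\Bigl(\vec{B}+\tfrac{\tau}{2}\curl\vec{E},\ \vec{E}+\tfrac{\tau}{2}\grad p,\ p\Bigr).
\]
This is a legitimate element of $\vec{V}_h$ precisely because of the discrete exact sequence \eqref{def:exact}: $\curl\vec{E}\in H_{h,\textrm{imp}}(\dive)$ and $\grad p\in H_{h,\textrm{imp}}(\curl)$. Inserting this triple into \eqref{def:bilinear-L}, the two skew pairs $\scalar{\mu^{-1}\curl\vec{E}}{\vec{B}}$ versus $\scalar{\mu^{-1}\vec{B}}{\curl\vec{F}}$ and $\scalar{\varepsilon\grad p}{\vec{E}}$ versus $\scalar{\varepsilon\vec{E}}{\grad q}$ cancel, and the perturbations create no new cross term in the $\|\dive\,\cdot\|$ contribution (since $\dive\curl\vec{E}=0$), none in the $\|\curl\,\cdot\|$ contribution (since $\curl\grad p=0$), and none in the boundary contribution (since $p=0$ on $\partial\Omega$, so $(\grad p)_{\textrm{tan}}\big|_{\Gamma_i}=0$). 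What survives is $\tfrac{2}{\tau}\|\vec{B}\|^2_{\mu^{-1}}+\|\dive\vec{B}\|^2+\tfrac{2}{\tau}\|\vec{E}\|^2_{\varepsilon}+(1+\gamma)\|\vec{E}\|^2_{\Gamma_i}+\tfrac{2}{\tau}\|p\|^2+\tfrac{\tau}{2}\|\curl\vec{E}\|^2_{\mu^{-1}}+\tfrac{\tau}{2}\|\grad p\|^2_{\varepsilon}$ together with the two unit-coefficient cross terms $\scalar{\mu^{-1}\vec{B}}{\curl\vec{E}}$ and $\scalar{\varepsilon\vec{E}}{\grad p}$. These I would absorb by Young's inequality, pairing $\scalar{\mu^{-1}\vec{B}}{\curl\vec{E}}$ against $\tfrac{1}{\tau}\|\vec{B}\|^2_{\mu^{-1}}$ and $\tfrac{\tau}{4}\|\curl\vec{E}\|^2_{\mu^{-1}}$, and $\scalar{\varepsilon\vec{E}}{\grad p}$ against $\tfrac{1}{\tau}\|\vec{E}\|^2_{\varepsilon}$ and $\tfrac{\tau}{4}\|\grad p\|^2_{\varepsilon}$, with the Young weight chosen equal to $\tau/2$ so that exactly half of each squared-norm term is retained. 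This yields $a(\vec{B},\vec{E},p;\vec{C},\vec{F},q)\ge\tfrac12\bigl(\|\vec{B}\|^2_{\dive}+\|\vec{E}\|^2_{\curl}+\|p\|^2_{\grad}\bigr)$.

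The proof then closes by bounding the norm of the test function: using the triangle inequality and the same three structural facts ($\dive\curl\,\cdot=0$, $\curl\grad\,\cdot=0$, $(\grad p)_{\textrm{tan}}|_{\Gamma_i}=0$), a short computation gives $\|\vec{C}\|^2_{\dive}+\|\vec{F}\|^2_{\curl}+\|q\|^2_{\grad}\le 4\bigl(\|\vec{B}\|^2_{\dive}+\|\vec{E}\|^2_{\curl}+\|p\|^2_{\grad}\bigr)$, i.e. the test-function norm is at most twice that of $(\vec{B},\vec{E},p)$; dividing the coercivity estimate by this bound produces exactly the constant $1/4$ in \eqref{ine:inf-sup}. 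I expect the only genuinely delicate step to be the bookkeeping in the Young's inequality argument, where the $\tau$-weights must be tuned so that absorbing the two cross terms still leaves a fixed positive fraction of every squared-norm contribution, uniformly in all physical and discretization parameters; everything else is structural and rests on the exact sequence \eqref{def:exact} (to keep the perturbed test functions in $\vec{V}_h$ and to make the divergence and curl cross terms vanish) and on the homogeneous boundary data for $p$ (to make the extra $\Gamma_i$ term vanish).
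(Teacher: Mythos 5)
Your proposal is correct and follows essentially the same route as the paper: the identical test triple $(\vec{B}+\tfrac{\tau}{2}\curl\vec{E},\ \vec{E}+\tfrac{\tau}{2}\grad p,\ p)$, the same cancellations via $\dive\curl=0$, $\curl\grad=0$ and the vanishing tangential gradient of $p$ on $\Gamma_i$, the same Young's-inequality absorption yielding the coercivity constant $\tfrac12$, and the same bound of $4$ on the test-function norm giving the factor $\tfrac14$, with boundedness by termwise Cauchy--Schwarz. The only cosmetic difference is that you justify well-posedness directly from finite-dimensionality and injectivity, where the paper cites Babuska--Brezzi theory; both are valid.
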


\begin{proof}
Choose $\vec{C} = \vec{B} + \frac{\tau}{2} \, \curl \, \vec{E}$,
$\vec{F} = \vec{E} + \frac{\tau}{2} \, \grad \, p $, and $q = p$.  Then, 
\begin{align*}
a(\vec{B}, \vec{E}, p; \vec{C}, \vec{F}, q ) & =  \frac{ 2}{\tau}  \scalar{ \mu^{-1} \vec{B} }{\vec{B} + \frac{\tau}{2} \, \curl \, \vec{E}} + \scalar{ \mu^{-1}\curl\, \vec{E}}{\vec{B} + \frac{\tau}{2} \, \curl \, \vec{E}} + \scalar{ \dive\, \vec{B}}{\dive \, \vec{B}} \\
& \quad + \frac{2}{\tau} \scalar{ \varepsilon \vec{E}}{ \vec{E} + \frac{\tau}{2} \, \grad \, p} + \scalar{ \varepsilon \,  \grad \, p}{ \vec{E} + \frac{\tau}{2} \, \grad \, p} - \scalar{ \mu^{-1} \vec{B} }{\curl\,  \vec{E}} \\ 
& \quad + (1+\gamma)\scalar{  \vec{E}}{ \vec{E}}_{\Gamma_i}  + \frac{2}{\tau}\scalar{p}{p} - \scalar{\varepsilon \vec{E} }{\grad \, p},
\end{align*}
where we use the facts that $\dive \, \curl \, \vec{E} = 0$, $\curl \,
\grad \, p = 0$, and $\int_{\Gamma_i}
\scalar{\vec{E}_{\textrm{tan}}}{\grad \, p} = 0$.  Then, after some rearranging,
\begin{align*}
a(\vec{B}, \vec{E}, p; \vec{C}, \vec{F}, q ) & = \frac{2}{\tau} \| \vec{B} \|^2_{\mu^{-1}} + \scalar{ \mu^{-1}\vec{B}}{\curl \, \vec{E}} + \| \dive \, \vec{B} \|^2\\
& \quad + \frac{2}{\tau} \| \vec{E} \|_{\varepsilon}^2 + \frac{\tau}{2} \| \curl \, \vec{E} \|^2_{\mu^{-1}} + (1+\gamma) \| \vec{E} \|^2_{\Gamma_i} + \scalar{\varepsilon \vec{E}}{\grad \, p} \\
& \quad + \frac{2}{\tau} \| p \|^2 + \frac{\tau}{2} \| \grad \, p\|^2_{\varepsilon} \\
& \geq \frac{2}{\tau} \| \vec{B} \|^2_{\mu^{-1}} - \frac{1}{\tau}  \| \vec{B} \|^2_{\mu^{-1}} - \frac{\tau}{4} \| \curl \, \vec{E} \|^2_{\mu^{-1}}  + \| \dive \, \vec{B} \|^2 \\
& \quad + \frac{2}{\tau} \| \vec{E} \|_{\varepsilon}^2 + \frac{\tau}{2} \| \curl \, \vec{E} \|^2_{\mu^{-1}} + (1+\gamma) \| \vec{E} \|^2_{\Gamma_i} - \frac{1}{\tau}  \| \vec{E} \|^2_{\varepsilon}  - \frac{\tau}{4}  \| \grad \, p\|^2_{\varepsilon} \\
& \quad + \frac{2}{\tau} \| p \|^2 + \frac{\tau}{2}\| \grad \, p\|^2_{\varepsilon} \\
& = \frac{1}{\tau} \| \vec{B} \|^2_{\mu^{-1}} + \| \dive \, \vec{B} \|^2  + \frac{1}{\tau} \| \vec{E} \|_{\varepsilon}^2 +  \frac{\tau}{4} \| \curl \, \vec{E} \|^2_{\mu^{-1}} + (1+\gamma) \| \vec{E} \|^2_{\Gamma_i} \\
& \quad +  \frac{2}{\tau} \| p \|^2 + \frac{\tau}{4}\| \grad \, p\|^2_{\varepsilon} \\
& \geq \frac{1}{2} \left(  \| \vec{B} \|^2_{\dive} + \| \vec{E} \|^2_{\curl} + \| p \|^2_{\grad}  \right).
\end{align*}
On the other hand, 
\begin{align*}
\| \vec{C} \|^2_{\dive} + \| \vec{F} \|^2_{\curl} + \| q \|^2_{\grad} &= \| \vec{B} + \frac{\tau}{2} \, \curl \, \vec{E} \|^2_{\dive} + \| \vec{E} + \frac{\tau}{2} \, \grad \, p   \|^2_{\curl} + \| p \|^2_{\grad} \\
& \leq  2 \| \vec{B} \|^2_{\dive} + \frac{\tau^2}{2} \| \curl \, \vec{E} \|^2_{\dive} + 2 \| \vec{E} \|^2_{\curl} + \frac{\tau^2}{2} \| \grad \, p \|^2_{\curl} + \| p \|^2_{\grad} \\
&= 2 \| \vec{B} \|^2_{\dive} + \tau  \| \curl \, \vec{E} \|^2_{\mu^{-1}} + 2 \| \vec{E} \|^2_{\curl} + \tau \| \grad \, p \| ^2_{\varepsilon} +  \| p \|^2_{\grad}  \\
& \leq 4 \left(  \| \vec{B} \|^2_{\dive} + \| \vec{E} \|^2_{\curl} + \| p \|^2_{\grad}  \right).
\end{align*}
Then, the inf-sup condition, \eqref{ine:inf-sup}, follows directly.
Boundedness, \eqref{ine:bounded}, is derived from the definition of
the bilinear form, $a(\cdot, \cdot, \cdot; \cdot, \cdot, \cdot)$, and
some Cauchy-Schwarz inequalities.  Finally, the well-posedness of the auxiliary problem, \eqref{aux-mixed-form-B}-\eqref{aux-mixed-form-p}, follows by applying the Babuska-Brezzi theory. 
\end{proof}

\begin{theorem} \label{thm:well-posed}
If $\vec{g}_{\vec{B}} \in \left ( H^0_{h,\textrm{imp}}(\dive)\right )'$, the mixed formulation, \eqref{discrete-mixed-form-B}-\eqref{discrete-mixed-form-p}, is well-posed.
\end{theorem}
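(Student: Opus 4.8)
The plan is to reduce everything to Theorem~\ref{thm:well-posed-aux}. The bilinear forms of the auxiliary problem, \eqref{aux-mixed-form-B}--\eqref{aux-mixed-form-p}, and of the original problem, \eqref{discrete-mixed-form-B}--\eqref{discrete-mixed-form-p}, differ only by the consistency term $\scalar{\dive\,\vec{B}}{\dive\,\vec{C}}$. Hence, if one knows that the magnetic field is exactly divergence free, this term drops out and the two problems coincide, so that existence, uniqueness, and the weighted-norm stability estimate all transfer from Theorem~\ref{thm:well-posed-aux}. The entire proof therefore reduces to establishing the divergence-free property, and this is precisely the step in which the hypothesis $\vec{g}_{\vec{B}}\in(H^{0}_{h,\textrm{imp}}(\dive))'$ is used: I read it as saying that $(\vec{g}_{\vec{B}},\cdot)$ is (the extension by zero of) a functional on the divergence-free subspace, i.e.\ $(\vec{g}_{\vec{B}},\vec{C})=0$ for every $\vec{C}$ in the complement $Z$ of $H^{0}_{h,\textrm{imp}}(\dive)$ taken in the $L^{2}_{\mu^{-1}}$ inner product, so that $H_{h,\textrm{imp}}(\dive)=H^{0}_{h,\textrm{imp}}(\dive)\oplus Z$.

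With this splitting fixed, I would use one additional ingredient from the exact sequence~\eqref{def:exact}, namely that $\curl\,\vec{F}\in H^{0}_{h,\textrm{imp}}(\dive)$ for every $\vec{F}\in H_{h,\textrm{imp}}(\curl)$ (this is just $\dive\,\curl=0$ together with the impedance trace condition, and holds for any $\Omega$); consequently $\scalar{\mu^{-1}\curl\,\vec{F}}{\vec{C}}=0$ for all $\vec{C}\in Z$. Testing the first equation against $\vec{C}\in Z$ then does all the work. For any solution $(\vec{B},\vec{E},p)$ of \eqref{discrete-mixed-form-B}--\eqref{discrete-mixed-form-p}, equation \eqref{discrete-mixed-form-B} with $\vec{C}\in Z$ reduces to $\tfrac{2}{\tau}\scalar{\mu^{-1}\vec{B}}{\vec{C}}=0$, so $\vec{B}$ is $L^{2}_{\mu^{-1}}$-orthogonal to $Z$, i.e.\ $\vec{B}\in H^{0}_{h,\textrm{imp}}(\dive)$ and $\dive\,\vec{B}=0$. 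For the unique solution $(\vec{B}^{*},\vec{E}^{*},p^{*})$ of the auxiliary problem from Theorem~\ref{thm:well-posed-aux}, I decompose $\vec{B}^{*}=\vec{B}^{*}_{0}\oplus\vec{B}^{*}_{Z}$ along the splitting and note $\dive\,\vec{B}^{*}=\dive\,\vec{B}^{*}_{Z}$; then \eqref{aux-mixed-form-B} with $\vec{C}\in Z$ reduces to $\tfrac{2}{\tau}\scalar{\mu^{-1}\vec{B}^{*}_{Z}}{\vec{C}}+\scalar{\dive\,\vec{B}^{*}_{Z}}{\dive\,\vec{C}}=0$, and taking $\vec{C}=\vec{B}^{*}_{Z}$ yields $\tfrac{2}{\tau}\|\vec{B}^{*}_{Z}\|^{2}_{\mu^{-1}}+\|\dive\,\vec{B}^{*}_{Z}\|^{2}=0$, hence $\vec{B}^{*}_{Z}=0$ and $\dive\,\vec{B}^{*}=0$.

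Once $\dive\,\vec{B}^{*}=0$ is known, the consistency term vanishes identically, so $(\vec{B}^{*},\vec{E}^{*},p^{*})$ solves \eqref{discrete-mixed-form-B}--\eqref{discrete-mixed-form-p} as well, which gives existence; since every solution of \eqref{discrete-mixed-form-B}--\eqref{discrete-mixed-form-p} has $\dive\,\vec{B}=0$ and therefore also solves the auxiliary problem, uniqueness follows from the uniqueness part of Theorem~\ref{thm:well-posed-aux}; and because the two bilinear forms agree on the divergence-free fields, the inf--sup condition~\eqref{ine:inf-sup} and the boundedness~\eqref{ine:bounded} deliver the stability of \eqref{discrete-mixed-form-B}--\eqref{discrete-mixed-form-p} in the same weighted norms. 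The main obstacle is conceptual rather than computational: one must read the hypothesis correctly and, crucially, take the complement $Z$ in the $\mu^{-1}$-weighted inner product, so that both $\scalar{\mu^{-1}\curl\,\vec{E}}{\vec{C}}$ and $\scalar{\mu^{-1}\vec{B}^{*}_{0}}{\vec{C}}$ vanish for $\vec{C}\in Z$; without the hypothesis the quantity $\dive\,\vec{B}$ is genuinely uncontrolled by the original bilinear form and the weighted-norm stability would fail, whereas with the correct setup the remainder of the argument is a one-line energy estimate.
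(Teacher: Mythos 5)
Your proposal is correct and follows essentially the same route as the paper, which reduces the claim to Theorem~\ref{thm:well-posed-aux} via the equivalence of \eqref{discrete-mixed-form-B}--\eqref{discrete-mixed-form-p} and \eqref{aux-mixed-form-B}--\eqref{aux-mixed-form-p} under the hypothesis on $\vec{g}_{\vec{B}}$, but delegates the details of that equivalence to the cited results of Hu, Ma, and Xu. Your $\mu^{-1}$-orthogonal splitting $H_{h,\textrm{imp}}(\dive)=H^{0}_{h,\textrm{imp}}(\dive)\oplus Z$ and the energy identity forcing $\vec{B}^{*}_{Z}=0$ supply exactly those omitted details, consistent with the paper's later use of the condition $\dive\,\massM_{\bm{B}}^{-1}\vec{g}_{\vec{B}}=0$.
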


\begin{proof}
Since \eqref{discrete-mixed-form-B}-\eqref{discrete-mixed-form-p} and
\eqref{aux-mixed-form-B}-\eqref{aux-mixed-form-p} are equivalent, and
the latter is well-posed, then so is the original mixed formulation,
\eqref{discrete-mixed-form-B}-\eqref{discrete-mixed-form-p}.  Similar arguments as in Lemma 1 and Theorem 8 of
\cite{Hu.K;Ma.Y;Xu.J.2014a} give the result.
\end{proof}

\section{Robust Linear Solvers} \label{sec:solver}
Next, we develop the robust linear solvers for solving
\eqref{discrete-mixed-form-B}-\eqref{discrete-mixed-form-p}.  We 
consider two types of preconditioners.  One is based on the
well-posedness described above, and the other is based on block factorization.   

\subsection{Block Preconditioners based on Well-posedness}
The first type of preconditioner we consider follows from the
framework proposed in \cite{Loghin.D;Wathen.A2004} and
\cite{Mardal.K;Winther.R2010}.  Such preconditioners are constructed
based on the well-posdeness of the linear system.  Roughly speaking,
the well-posedness shows that the linear operator under consideration
is an isomorphism from the given Hilbert space to its dual.
Therefore, any isomorphism from the dual space back to the original
Hilbert space can be used as a preconditioner.  A natural choice for
such an isomorphism is the Riesz operator induced by the norm equipped
by the Hilbert space.  

\subsubsection{Preconditioner for the Auxiliary Problem}
First consider the auxiliary problem used in the proof of well-posedness. The matrix form of \eqref{aux-mixed-form-B}-\eqref{aux-mixed-form-p} is 
\begin{equation}\label{def:aux-operator}
\bigAaux \vec{x} = \vec{b} \Longleftrightarrow 
\begin{pmatrix}
\frac{2}{\tau} \massM_{\bm{B}} +  \matD^T \massM_0 \matD & \massM_{\bm{B}} \matK &   \\
- \matK^T \massM_{\bm{B}} & \frac{2}{\tau} \massM_{\bm{E}} + \matZ & \massM_{\bm{E}} \matG \\
   	& - \matG^T \massM_{\bm{E}} & \frac{2}{\tau} \massM_p  
\end{pmatrix}  
\begin{pmatrix}
\vec{B} \\
\vec{E} \\
p
\end{pmatrix}
= 
\begin{pmatrix}
\vec{g}_{\vec{B}}\\
\vec{g}_{\vec{E}} \\
g_{p}
\end{pmatrix},
\end{equation}
where $\massM_p$, $\massM_{\bm{E}}$, $\massM_{\bm{B}}$, and $\massM_0$ are the (weighted)
mass matrices for finite-element spaces $H_{h,0}(\grad)$,
$H_{h,\textrm{imp}}(\curl)$, $H_{h,\textrm{imp}}(\dive)$, and $L_h^2$,
respectively, and $\matZ$ represents the surface integral associated with the
impedance boundary condition.  Additionally, $\matG$, $\matK$, and $\matD$ are
incidence matrices representing the discrete gradient, curl, and
divergence operators on the given triangulation.  Let $\{\phi_i^{\grad} \}$,
$\{ \bm{\phi}_i^{\curl} \}$, and $\{ \bm{\phi}_i^{\dive}  \}$ be the
basis of $H_{h,0}(\grad)$, $H_{h,\textrm{imp}}(\curl)$, and
$H_{h,\textrm{imp}}(\dive)$, respectively.  Moreover, let $\{
\bm{\eta}_i^{\curl} \}$, $\{ \bm{\eta}_i^{\dive} \}$, and $\{
\eta_i^{L^2}  \}$ be the corresponding degrees of freedom.  Then, $\matG$, $\matK$, and $\matD$ are defined as follows:
\begin{align*}
\matG_{ij} &:= \bm{\eta}^{\curl}_i(\grad \; \phi_j^{\grad} ) \\
\matK_{ij}  &:= \bm{\eta}^{\dive}_i(\curl \; \bm{\phi}_j^{\curl} ) \\
\matD_{ij} &:= \eta^{L^2}_i(\dive \; \bm{\phi}_j^{\dive} )
\end{align*}
Based on this definition, we naturally have 
\begin{equation*}
\matK\matG = \bm{0} \quad \text{and} \quad \matD\matK = \bm{0},
\end{equation*}
which are the discrete counterparts of $\curl \; \grad = 0$ and $\dive
\; \curl = 0$.  Another crucial property on the discrete level is $\matG^T \matZ = 0$.  This follows from the fact that
\begin{equation*}
\scalar{\matZ \vec{E}}{\grad \, p} = (1+ \gamma) \int_{\Gamma_i} \scalar{\vec{n}\wedge\vec{E}}{ \vec{n} \wedge \grad \, p} = 0, \quad \forall \vec{E} \in H_{h, \textrm{imp}}(\curl), \ p \in H_{h,0}(\grad).
\end{equation*}
Note that these properties hold for any order of
finite-element spaces as long as the spaces satisfy the exact sequence
in \eqref{def:exact}.


Based on this framework, we first consider the following block diagonal preconditioner, which corresponds to the Reisz operator induced by the weighted norm $\| \cdot \|_{\dive}$, $\| \cdot \|_{\curl}$, and $\| \cdot \|_{\grad}$:
\begin{equation*} 
\widetilde{\preconWaux_{\prediag}} =
\begin{pmatrix}
\matD^T \massM_0 \matD  + \frac{2}{\tau} \massM_{\bm{B}} &  0 &  0 \\
0  & \frac{\tau}{2} \matK^T \massM_{\bm{B}} \matK+ \frac{2}{\tau} \massM_{\bm{E}}  + \matZ & 0 \\
0 &  0  & \frac{\tau}{2} \matG^T \massM_p \matG+  \frac{2}{\tau} \massM_p 
\end{pmatrix}^{-1}.
\end{equation*}
Together with the well-posedness of the auxiliary problem (Theorem
\ref{thm:well-posed-aux}) and the results in
\cite{Loghin.D;Wathen.A2004,Mardal.K;Winther.R2010}, the condition
number of the preconditioned system, $\kappa(\widetilde{\preconWaux_{\prediag}}\bigAaux) = O(1)$, which implies that $\widetilde{\preconWaux_{\prediag}}$ is a robust preconditioner. 

In practice, the action of $\widetilde{\preconWaux_{\prediag}}$ involves
the inversion of three diagonal blocks, which could be expensive.   In order to reduce the cost, we replace the diagonal blocks of $\widetilde{\preconWaux_{\prediag}}$ by their spectral equivalent symmetric positive definite (SPD) approximations:
\begin{equation*}
\preconWaux_{\prediag} = \textrm{diag}\left(  \preQ_{\bm{B}}, \preQ_{\bm{E}}, \preQ_{p} \right),
\end{equation*}
Using HX-preconditioners \cite{Hiptmair.R;Xu.J2007} for
$\preQ_{\bm{B}}$ and $\preQ_{\bm{E}}$ and standard multigrid (MG)
preconditioners for $\preQ_{p}$, it is shown that the
condition number $\kappa(\preconWaux_{\prediag}\bigAaux) = O(1)$ \cite{Mardal.K;Winther.R2010}. 

\subsubsection{Preconditioner for the Original Formulation}
Next, we consider the original structure-preserving discretization,
\eqref{discrete-mixed-form-B}-\eqref{discrete-mixed-form-p}.  In
matrix form, we write,
\begin{equation}\label{def:discrete-operator}
\bigA \vec{x} = \vec{b} \Longleftrightarrow 
\begin{pmatrix}
\frac{2}{\tau} \massM_{\bm{B}} & \massM_{\bm{B}} \matK &   \\
- \matK^T \massM_{\bm{B}} & \frac{2}{\tau} \massM_{\bm{E}} + \matZ & \massM_{\bm{E}} \matG \\
   	& - \matG^T \massM_{\bm{E}} & \frac{2}{\tau} \massM_p  
\end{pmatrix}  
\begin{pmatrix}
\vec{B} \\
\vec{E} \\
p
\end{pmatrix}
= 
\begin{pmatrix}
\vec{g}_{\vec{B}}\\
\vec{g}_{\vec{E}} \\
g_{p}
\end{pmatrix},
\end{equation}
which is obtained by removing the stabilization term, $\matD^T
\massM_0 \matD$, in $\bigAaux$.  Removing the stabilization term in
the preconditioner $\widetilde{\preconWaux}_{\prediag}$, then, we obtain a diagonal block preconditioner for $\bigA$:
\begin{equation}\label{def:B_D} 
\widetilde{\preconW}_{\prediag} =
\begin{pmatrix}
 \frac{2}{\tau} \massM_{\bm{B}} &  0 &  0 \\
0  & \frac{\tau}{2} \matK^T \massM_{\bm{B}} \matK+ \frac{2}{\tau} \massM_{\bm{E}}  + \matZ & 0 \\
0 &  0  & \frac{\tau}{2} \matG^T \massM_p \matG+  \frac{2}{\tau} \massM_p 
\end{pmatrix}^{-1}.
\end{equation}
Using the fact that $\matD \matK = \bm{0}$, we have,
\begin{equation*}
\left( \matD^T \massM \matD  + \frac{2}{\tau} \massM_{\bm{B}} \right)^{-1}   \massM_{\bm{B}} \matK = \frac{\tau}{2} \matK = \left( \frac{2}{\tau} \massM_{\bm{B}} \right)^{-1}   \massM_{\bm{B}} \matK.
\end{equation*}
Therefore, $\widetilde{\preconWaux}_{\prediag} \bigAaux =
\widetilde{\preconW}_{\prediag} \bigA$, which implies that
$\kappa(\widetilde{\preconW}_{\prediag} \bigA) = O(1)$ and
$\widetilde{\preconW}_{\prediag}$ is a robust preconditioner for
$\bigA$.  Obviously, the action of $\widetilde{\preconW}_{\prediag}$
can be expensive in practice, so we replace the diagonal blocks of $\widetilde{\preconW}_{\prediag}$ by their spectral equivalent SPD approximations:
\begin{equation}\label{def:M_D}
\preconW_{\prediag} = \textrm{diag}\left(  \preQ_{\bm{B}}, \preQ_{\bm{E}}, \preQ_{p} \right).
\end{equation}
It is easy to see that $\kappa(\preconW_{\prediag} \bigA) = O(1)$ and $\preconW_{\prediag}$ is a robust preconditioner for $\bigA$.

\subsubsection{Keeping the Magnetic Field Solenoidal}
In \cite{2013AdlerJ_PetkovV_ZikatanovL-aa}, we show that an important
feature of the structure-preserving discretization, \eqref{discrete-mixed-form-B}-\eqref{discrete-mixed-form-p}, is that it keeps
$\dive\,\vec{B} = 0$ at every time step.  Here, we follow the approach
proposed in \cite{Hu.K;Hu.X;Xu.J;Ma.Y.2014a} to show that it is
possible to preserve the divergence-free condition for each iteration
of the linear solver. 

\begin{theorem}\label{thm:B-D-div-free}
Assume the initial guess, $\vec{x}^0 = (\vec{B}^0, \vec{E}^0, p^0)^T$,
and right-hand side, $\vec{b} = (\vec{g}_{\vec{B}}, \vec{g}_E, g_p)^T$,
satisfy $\dive \, \vec{B}^0 = 0$ and $\dive \, \massM^{-1}_{\bm{B}}
\vec{g}_{\vec{B}} = 0$, respectively.  Then, all iterations, $\vec{x}^l
= (\vec{B}^l, \vec{E}^l, p^l)^T$, of the
$\widetilde{\preconW}_{\prediag}$ preconditioned GMRES method satisfy $\dive \, \vec{B}^l = 0$.
\end{theorem}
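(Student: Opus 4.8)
The plan is to track the divergence of the magnetic-field component through one step of preconditioned GMRES and show it stays zero, then conclude by induction on the iteration index $l$. The key observation is that GMRES iterates live in the affine Krylov space $\vec{x}^0 + \mathrm{span}\{ \vec{r}^0, (\widetilde{\preconW}_{\prediag}\bigA)\vec{r}^0, \dots \}$, where $\vec{r}^0 = \widetilde{\preconW}_{\prediag}(\vec{b} - \bigA\vec{x}^0)$ is the preconditioned residual. So it suffices to show that the subspace
\[
\mathcal{Z} := \{ (\vec{B},\vec{E},p)^T : \dive\,\vec{B} = 0 \}
\]
is (i) containing $\vec{x}^0$, (ii) containing $\widetilde{\preconW}_{\prediag}\vec{b}$, and (iii) invariant under the preconditioned operator $\widetilde{\preconW}_{\prediag}\bigA$. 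Then every Krylov vector lies in $\mathcal{Z}$, hence so does $\vec{x}^l$, since GMRES only forms affine combinations of these. Property (i) is the hypothesis $\dive\,\vec{B}^0 = 0$; property (ii) is the hypothesis $\dive\,\massM_{\bm{B}}^{-1}\vec{g}_{\vec{B}} = 0$ together with the fact that the $\vec{B}$-block of $\widetilde{\preconW}_{\prediag}$ is $\frac{\tau}{2}\massM_{\bm{B}}^{-1}$ (so the $\vec{B}$-component of $\widetilde{\preconW}_{\prediag}\vec{b}$ is $\frac{\tau}{2}\massM_{\bm{B}}^{-1}\vec{g}_{\vec{B}}$, whose divergence vanishes).

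The heart of the argument is the invariance property (iii). Take any $(\vec{B},\vec{E},p)^T \in \mathcal{Z}$, i.e. $\matD\vec{B} = 0$ in the incidence-matrix notation. First apply $\bigA$: the $\vec{B}$-row gives $\frac{2}{\tau}\massM_{\bm{B}}\vec{B} + \massM_{\bm{B}}\matK\vec{E}$. Then apply $\widetilde{\preconW}_{\prediag}$: its $\vec{B}$-block is $\frac{\tau}{2}\massM_{\bm{B}}^{-1}$, so the new $\vec{B}$-component is
\[
\frac{\tau}{2}\massM_{\bm{B}}^{-1}\left( \tfrac{2}{\tau}\massM_{\bm{B}}\vec{B} + \massM_{\bm{B}}\matK\vec{E} \right) = \vec{B} + \frac{\tau}{2}\matK\vec{E}.
\]
Applying $\matD$ and using $\matD\vec{B} = 0$ together with the exact-sequence identity $\matD\matK = \bm{0}$ gives $\matD(\vec{B} + \frac{\tau}{2}\matK\vec{E}) = 0$, so the image lies in $\mathcal{Z}$. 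This is the same cancellation already exploited in the text to show $\widetilde{\preconWaux}_{\prediag}\bigAaux = \widetilde{\preconW}_{\prediag}\bigA$; it is precisely why the stabilization term $\matD^T\massM_0\matD$ is invisible to vectors with solenoidal $\vec{B}$-component.

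With (i)--(iii) in hand, one finishes by a one-line induction: $\mathcal{Z}$ is a linear subspace containing $\vec{x}^0$ and $\vec{r}^0 = \widetilde{\preconW}_{\prediag}\vec{b} - \widetilde{\preconW}_{\prediag}\bigA\vec{x}^0 \in \mathcal{Z}$, and it is invariant under $\widetilde{\preconW}_{\prediag}\bigA$, so the full Krylov space $\vec{x}^0 + \mathcal{K}_l$ sits inside $\mathcal{Z}$; since $\vec{x}^l$ is the GMRES iterate from that space, $\dive\,\vec{B}^l = 0$. I expect the only real subtlety to be stating cleanly that GMRES iterates (and residuals) stay in the affine subspace $\mathcal{Z}$ — i.e. that forming the least-squares minimizer does not leave the space — which is immediate once $\mathcal{Z}$ is recognized as an affine/linear subspace closed under the relevant operations; everything else is the bookkeeping identity $\matD\matK = \bm0$ already available from the exact sequence \eqref{def:exact}. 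This mirrors the argument in \cite{Hu.K;Hu.X;Xu.J;Ma.Y.2014a}.
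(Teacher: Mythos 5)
Your proof is correct and follows essentially the same route as the paper's: both arguments reduce to the observation that the $\vec{B}$-component of $\widetilde{\preconW}_{\prediag}\bigA$ applied to a vector with solenoidal $\vec{B}$-part is $\vec{B}+\frac{\tau}{2}\matK\vec{E}$, which is divergence-free by $\matD\matK=\bm{0}$, followed by induction along the Krylov basis. Your invariant-subspace phrasing is just a clean repackaging of the paper's explicit recursion on $\vec{v}^m=(\widetilde{\preconW}_{\prediag}\bigA)^m\vec{r}^0$ (and your update formula even has the coefficients written more carefully than the paper's equation \eqref{eqn:B-relation}).
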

\begin{proof}
According to the definition of preconditioned GMRES, we have
\begin{equation*}
\vec{x}^l \in \vec{x}^0 + \mathcal{K}^l(\widetilde{\preconW}_{\prediag} \bigA, \vec{r}^0), 
\end{equation*}
where, 
$$\mathcal{K}^l(\widetilde{\preconW}_{\prediag} \bigA, \vec{r}^0)
= \textrm{span}\{  \vec{r}^0, \widetilde{\preconW}_{\prediag} \bigA
\vec{r}^0,  \left( \widetilde{\preconW}_{\prediag} \bigA \right)^2
\vec{r}^0, \cdots,  \left( \widetilde{\preconW}_{\prediag} \bigA \right)^{l-1}\vec{r}^0 \},$$
and $\vec{r}^0 = (\vec{r}_{\vec{B}}^0, \vec{r}^0_{\vec{E}},
\vec{r}^0_p)^T := \widetilde{\preconW}_{\prediag} (\vec{b} -  \bigA
\vec{x}^0) $.  Note that $\dive \, \vec{r}^0_{\vec{B}} = 0$.

Denote $\vec{v}^m = (\vec{v}^m_{\vec{B}}, \vec{v}^m_{\vec{E}},
\vec{v}^m_p)^T := \left( \widetilde{\preconW}_{\prediag} \bigA
\right)^m \vec{r}^0$, $m = 0,1,2, \cdots, l-1$.  Since $\vec{v}^m =
\widetilde{\preconW}_{\prediag} \bigA  \vec{v}^{m-1}$, we obtain,
\begin{equation} \label{eqn:B-relation}
\vec{v}^m_{\vec{B}} = \left(  \frac{\tau}{2} \massM_{\bm{B}} \right)^{-1} \left(  \frac{\tau}{2} \massM_{\bm{B}} \vec{v}^{m-1}_{\vec{B}} + \massM_{\bm{B}} \matK  \vec{v}^{m-1}_{\vec{B}} \right) = \vec{v}^{m-1}_{\vec{B}} + \frac{2}{\tau} \matK \vec{v}^{m-1}_{\vec{B}}.
\end{equation}
Then, $\dive \, \vec{v}^m_{\vec{B}} = 0$ if $\dive \,
\vec{v}^{m-1}_{\vec{B}} = 0$.  Since $\dive \, \vec{r}^0_{\vec{B}} =
0$, by induction, we have\\
 $\dive \, \vec{v}^m_{\vec{B}} = 0$.  

Finally, $\vec{x}^l$ is a linear combination of $\vec{v}^m$, $m=0,1,2\cdots, l-1$, which implies that $\vec{B}^l$ is a linear combination of $\vec{v}^m_{\vec{B}}$. Since $\dive \, \vec{v}^m_{\vec{B}} = 0$, we conclude that $\dive \, \vec{B}^l  = 0$ for all $l$.
\end{proof}

The above theory says that using $\widetilde{\preconW}_{\prediag}$ as
a preconditioner preserves the divergence-free condition of $\vec{B}$.  However, the preconditioner $\preconW_{\prediag}$, in general, may not.  A remedy is to use $\preQ_{\bm{B}} = \left(  \frac{\tau}{2} \massM_{\bm{B}} \right)^{-1}$, which leads to
\begin{equation}\label{def:M_D-div-free}
\preconW_{\prediag} = \textrm{diag}\left( \left( \frac{\tau}{2} \massM_{\bm{B}} \right)^{-1}, \preQ_{\bm{E}}, \preQ_{p} \right).
\end{equation}
While it may seem impractical to use such a preconditioner, because of
the need to invert the mass matrix exactly, using
\eqref{eqn:B-relation}, we can update $\vec{v}_B^{m}$ without this
inversion.  Thus, using $\preconW_{\prediag}$ as the
preconditioner still allows for the preservation of the divergence-free condition for all the iterations of GMRES.

\begin{theorem}\label{thm:M_D-div-free}
Assume the initial guess, $\vec{x}^0 = (\vec{B}^0, \vec{E}^0, p^0)^T$,
and right-hand side, $\vec{b} = (\vec{g}_{\vec{B}}, \vec{g}_E, g_p)^T$,
satisfy $\dive \, \vec{B}^0 = 0$ and $\dive \, \massM^{-1}_{\bm{B}}
\vec{g}_{\vec{B}} = 0$, respectively.  Then, all iterations, $\vec{x}^l
= (\vec{B}^l, \vec{E}^l, p^l)^T$, of the
$\preconW_{\prediag}$ preconditioned GMRES method satisfy $\dive \, \vec{B}^l = 0$.
\end{theorem}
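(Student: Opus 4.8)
The plan is to follow the proof of Theorem~\ref{thm:B-D-div-free} almost line for line; the only new point is that the $(1,1)$ block of $\preconW_{\prediag}$ is now the scaled mass-matrix inverse $\preQ_{\bm{B}} = (\tfrac{\tau}{2}\massM_{\bm{B}})^{-1}$ rather than the corresponding block of $\widetilde{\preconW}_{\prediag}$, so one has to check that this replacement does not break the mechanism that keeps $\dive\,\vec{B}$ zero. By the definition of $\preconW_{\prediag}$-preconditioned GMRES,
\[
\vec{x}^l \in \vec{x}^0 + \mathcal{K}^l(\preconW_{\prediag}\bigA, \vec{r}^0), \qquad \vec{r}^0 := \preconW_{\prediag}(\vec{b} - \bigA\vec{x}^0),
\]
so $\vec{B}^l$ is an affine combination of $\vec{B}^0$ and the first components $\vec{v}^m_{\vec{B}}$ of the Krylov basis vectors $\vec{v}^m := (\preconW_{\prediag}\bigA)^m\vec{r}^0$, $m=0,\dots,l-1$. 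Since $\dive\,\vec{B}^0=0$ by assumption, it suffices to prove $\matD\,\vec{v}^m_{\vec{B}}=0$ for all such $m$ and then invoke linearity.

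For the base case $m=0$, reading the first block row of $\bigA$ from \eqref{def:discrete-operator} gives $\vec{v}^0_{\vec{B}}=\vec{r}^0_{\vec{B}}=\preQ_{\bm{B}}\bigl(\vec{g}_{\vec{B}}-\tfrac{2}{\tau}\massM_{\bm{B}}\vec{B}^0-\massM_{\bm{B}}\matK\vec{E}^0\bigr)$. Because $\preQ_{\bm{B}}$ is a scalar multiple of $\massM_{\bm{B}}^{-1}$, applying $\matD$ produces a linear combination of $\matD\massM_{\bm{B}}^{-1}\vec{g}_{\vec{B}}$, $\matD\vec{B}^0$, and $\matD\matK\vec{E}^0$; the first two vanish by the hypotheses $\dive\,\massM^{-1}_{\bm{B}}\vec{g}_{\vec{B}}=0$ and $\dive\,\vec{B}^0=0$, and the third vanishes because $\matD\matK=\bm{0}$. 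Hence $\matD\,\vec{v}^0_{\vec{B}}=0$. For the induction step, $\vec{v}^m=\preconW_{\prediag}\bigA\,\vec{v}^{m-1}$ together with the zero $(1,3)$ block of $\bigA$ force the first component to depend only on $\vec{v}^{m-1}_{\vec{B}}$ and $\vec{v}^{m-1}_{\vec{E}}$; using again that $\preQ_{\bm{B}}$ is proportional to $\massM_{\bm{B}}^{-1}$,
\[
\vec{v}^m_{\vec{B}} = \preQ_{\bm{B}}\Bigl(\tfrac{2}{\tau}\massM_{\bm{B}}\vec{v}^{m-1}_{\vec{B}} + \massM_{\bm{B}}\matK\vec{v}^{m-1}_{\vec{E}}\Bigr) = c_1\,\vec{v}^{m-1}_{\vec{B}} + c_2\,\matK\,\vec{v}^{m-1}_{\vec{E}}
\]
for scalars $c_1,c_2$ (this is the relation \eqref{eqn:B-relation} up to an inessential scaling). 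Applying $\matD$ and using $\matD\matK=\bm{0}$ gives $\matD\,\vec{v}^m_{\vec{B}} = c_1\,\matD\,\vec{v}^{m-1}_{\vec{B}} = 0$ by the inductive hypothesis. Therefore $\matD\,\vec{v}^m_{\vec{B}}=0$ for every $m$, and since $\vec{B}^l$ is an affine combination of $\vec{B}^0$ and these $\vec{v}^m_{\vec{B}}$, we conclude $\dive\,\vec{B}^l=0$ for all $l$.

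There is essentially no obstacle beyond what was already handled in Theorem~\ref{thm:B-D-div-free}: the single thing that genuinely must be verified is that replacing the $(1,1)$ block of $\widetilde{\preconW}_{\prediag}$ by $\preQ_{\bm{B}}=(\tfrac{\tau}{2}\massM_{\bm{B}})^{-1}$ keeps the $\vec{B}$-recursion \emph{closed}, i.e.\ $\vec{v}^m_{\vec{B}}$ still lies in $\mathrm{span}\{\vec{v}^{m-1}_{\vec{B}},\matK\vec{v}^{m-1}_{\vec{E}}\}$; since $\preQ_{\bm{B}}$ is literally proportional to $\massM_{\bm{B}}^{-1}$ this is immediate, and the discrete identity $\matD\matK=\bm{0}$ then finishes the argument. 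The only practical nuisance, namely that \eqref{def:M_D-div-free} formally requires the exact inverse of $\massM_{\bm{B}}$, is circumvented in an implementation by updating $\vec{v}^m_{\vec{B}}$ directly from \eqref{eqn:B-relation}, and it has no bearing on the statement itself.
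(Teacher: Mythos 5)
Your proposal is correct and is exactly the argument the paper intends: the paper's own proof of this theorem is a one-line remark that the proof of Theorem~\ref{thm:B-D-div-free} carries over verbatim with $\widetilde{\preconW}_{\prediag}$ replaced by $\preconW_{\prediag}$, and you have simply written out that adaptation, correctly identifying that the only point to check is that $\preQ_{\bm{B}}=(\tfrac{\tau}{2}\massM_{\bm{B}})^{-1}$ keeps the recursion $\vec{v}^m_{\vec{B}}\in\mathrm{span}\{\vec{v}^{m-1}_{\vec{B}},\matK\vec{v}^{m-1}_{\vec{E}}\}$ closed so that $\matD\matK=\bm{0}$ finishes the induction. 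If anything, your write-up is slightly more careful than the paper's (you justify $\matD\,\vec{r}^0_{\vec{B}}=0$ explicitly and track the $\vec{x}^0$ term in the affine combination), but the route is the same.
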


\begin{proof}
The proof is the same as for Theorem \ref{thm:B-D-div-free} with $\widetilde{\preconW}_{\prediag}$ replaced by $\preconW_{\prediag}$.
\end{proof}

\subsubsection{Generalization} We conclude this subsection with the
generalization of the block diagonal
preconditioner to a block triangular preconditioner, 
\begin{equation}\label{def:M_L-div-free}
\preconW_{\prelow} = 
\begin{pmatrix}
\left( \frac{\tau}{2} \massM_{\vec{B}}  \right)^{-1} & 0 & 0 \\
- \matK^T\massM_{\vec{B}}  &  \preQ_{\vec{E}}^{-1} &  0 \\
 0   &  -\matG^T \massM_{\vec{E}} & \preQ_p^{-1}
\end{pmatrix}^{-1},
\end{equation}
and 
\begin{equation}\label{def:M_U-div-free}
\preconW_{\preupp} = 
\begin{pmatrix}
\left( \frac{\tau}{2} \massM_{\vec{B}}  \right)^{-1} & \massM_{\vec{B}} \matK & 0 \\
0 &  \preQ_{\bm{E}}^{-1} & \massM_{\vec{E}} \matG  \\
 0   &  0 & \preQ_p^{-1}
\end{pmatrix}^{-1}.
\end{equation}
Since the analysis for $\preconW_{\preupp}$ is the same, we only
consider $\preconW_{\prelow}$ here.   Also, note that we use
$\frac{\tau}{2} \massM_{\bm{B}}$ for the first diagonal block in order
to keep the divergence-free condition.  

With a slight abuse of notation, we define $A_{\vec{B}}$, $A_{\vec{E}}$, and $A_p$ as follows:
\begin{align*}
\scalar{A_{\vec{B}} \vec{B}}{\vec{C}} & = \scalar{\vec{B}}{\vec{C}}_{\dive}, \ \forall \vec{C} \in H_{h, \text{imp}}(\dive) \\
\scalar{A_{\vec{E}} \vec{E}}{\vec{F}} & = \scalar{\vec{E}}{\vec{F}}_{\curl}, \ \forall \vec{F} \in H_{h, \text{imp}}(\curl) \\
\scalar{A_{p} p}{q} & = \scalar{p}{q}_{\grad}, \ \forall q \in H_{h,0}(\grad).
\end{align*}
Note that $\preQ_{\vec{E}}$ and $\preQ_p$ are spectrally equivalent to
the inverse of of $A_{\vec{E}}$ and $A_{p}$:
\begin{align}
& c_{1, \vec{E}} \scalar{\preQ_{\vec{E}} \, \vec{E}}{\vec{E}} \leq \scalar{ A_{\vec{E}}^{-1}\, \vec{E}}{\vec{E}} \leq c_{2, \vec{E}} \scalar{\preQ_{\vec{E}} \, \vec{E}}{\vec{E}}, \label{ine:spectral-Q_E} \\
& c_{1, p} \scalar{\preQ_{p} \, p}{p} \leq \scalar{ A_p^{-1}\, p}{p} \leq c_{2, p} \scalar{\preQ_{p} \, p}{p}. \label{ine:spectral-Q_p}
\end{align}

Following the standard convergence analysis of GMRES, we derive the following theorem concerning the so-called \emph{Field-of-Value} of $\preconW_{\prelow}\bigA$.  Here, we use the norm $\| \cdot \|_{\preconW^{-1}}$ induced by $\preconW = \text{diag} \left( A_{\vec{B}}^{-1}, \preQ_{\vec{E}}, \preQ_p \right)$.

\begin{theorem}\label{thm:wp}
Assume \eqref{ine:spectral-Q_E} and \eqref{ine:spectral-Q_p} hold, then there exists constants, $\lambda$ and $\Lambda$, such that for any $\vec{x} \neq \vec{0}$, 
\begin{equation*}
\lambda \leq \frac{ \scalar{\preconW_{\prelow} \bigA \vec{x}}{\vec{x}}_{\preconW^{-1}} }{\scalar{\vec{x}}{\vec{x}}_{\preconW^{-1}}}, \quad \frac{\| \preconW_{\prelow}^{-1} \bigA \vec{x} \|_{\preconW_{\prelow}^{-1}}}{\| \vec{x} \|_{\preconW_{\prelow}^{-1}}} \geq \Lambda,
\end{equation*}
provided $ \| I_{\vec{E}} - \preQ_{\vec{E}} A_{\vec{E}} \|_{A_{\vec{E}}} \leq \rho < \sqrt{3}-1$.  Here, the constants $\lambda$ and $\Lambda$ do not depend on neither the discretization parameters, $h$ and $\tau$, nor the physical parameters, $\varepsilon$ and $\mu^{-1}$.  
\end{theorem}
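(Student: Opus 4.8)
The plan is to establish the two field-of-value bounds for the block lower-triangular preconditioner $\preconW_{\prelow}$ by reducing everything to the block-diagonal case $\widetilde{\preconW}_{\prediag}$ (respectively its SPD-replacement $\preconW_{\prediag}$), whose robustness is already in hand from Theorem~\ref{thm:well-posed-aux} and the equivalence $\widetilde{\preconWaux}_{\prediag}\bigAaux = \widetilde{\preconW}_{\prediag}\bigA$. First I would write $\preconW_{\prelow}^{-1}$ as $\prelow$, the block lower-triangular matrix displayed in \eqref{def:M_L-div-free}, and decompose it as $\prelow = \prediag - \mathcal{N}$, where $\prediag = \text{diag}\bigl((\tfrac{\tau}{2}\massM_{\vec{B}})^{-1}, \preQ_{\vec{E}}^{-1}, \preQ_p^{-1}\bigr)$ is the (SPD) diagonal part and $\mathcal{N}$ collects the strictly-lower off-diagonal blocks $-\matK^T\massM_{\vec{B}}$ and $-\matG^T\massM_{\vec{E}}$. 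The key algebraic observation, analogous to the one used to prove $\widetilde{\preconWaux}_{\prediag}\bigAaux = \widetilde{\preconW}_{\prediag}\bigA$, is that those off-diagonal blocks in $\prelow$ are exactly (half of) the antisymmetric coupling blocks $-\matK^T\massM_{\vec{B}}$ and $-\matG^T\massM_{\vec{E}}$ appearing in $\bigA$ itself. Consequently $\prediag^{-1}\bigA$ and $\prediag^{-1}\prelow$ differ in a controlled way, and $\preconW_{\prelow}\bigA = \prelow^{-1}\bigA = (I - \prediag^{-1}\mathcal{N})^{-1}\prediag^{-1}\bigA$.

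Next I would pass to the $\preconW^{-1}$ inner product, where $\preconW = \text{diag}(A_{\vec{B}}^{-1}, \preQ_{\vec{E}}, \preQ_p)$, and compute $\scalar{\preconW_{\prelow}\bigA \vec{x}}{\vec{x}}_{\preconW^{-1}}$. Writing $\vec{x} = (\vec{B},\vec{E},p)^T$ and expanding blockwise, the lower-triangular structure means the first block equation contributes $\scalar{A_{\vec{B}}\bigl(\tfrac{2}{\tau}\massM_{\vec{B}}\vec{B} + \massM_{\vec{B}}\matK\vec{E}\bigr)}{\vec{B}}$ — but since $A_{\vec{B}}$ is the Riesz map for $\|\cdot\|_{\dive}$ and we have deliberately chosen the exact block $(\tfrac{\tau}{2}\massM_{\vec{B}})^{-1}$, the first diagonal term produces exactly $\tfrac{4}{\tau^2}\|\vec{B}\|_{\mu^{-1}}^2$ plus a term that, using $\matD\matK = 0$, matches the structure of the coercivity argument in the proof of Theorem~\ref{thm:well-posed-aux}. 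For the $\vec{E}$ and $p$ blocks I would use the spectral-equivalence bounds \eqref{ine:spectral-Q_E}–\eqref{ine:spectral-Q_p} to replace $\preQ_{\vec{E}}, \preQ_p$ by $A_{\vec{E}}^{-1}, A_p^{-1}$ up to the constants $c_{1,\vec{E}}, c_{2,\vec{E}}, c_{1,p}, c_{2,p}$, reducing the estimate to the exact-diagonal case. The cross terms coming from the off-diagonal blocks of $\prelow$ and from the antisymmetric part of $\bigA$ should partially cancel — this is the mechanism by which triangular preconditioners retain coercivity — and the surviving cross terms are bounded by Cauchy–Schwarz and Young's inequality, absorbed into the diagonal terms; this is where the hypothesis $\rho < \sqrt{3}-1$ on $\|I_{\vec{E}} - \preQ_{\vec{E}}A_{\vec{E}}\|_{A_{\vec{E}}}$ enters, controlling how far $\preQ_{\vec{E}}$ departs from the exact Riesz inverse so that the Young-inequality constants still leave a positive lower bound $\lambda$.

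For the second inequality, the norm-boundedness-below of $\preconW_{\prelow}^{-1}\bigA$ in the $\preconW_{\prelow}^{-1}$-norm, I would argue that $\bigA$ is an isomorphism from $\vec{V}_h$ (with the $\preconW^{-1}$ norm) onto its dual — this is exactly Theorem~\ref{thm:well-posed} combined with the norm equivalences $\|\vec{C}\|_{\dive}^2 + \|\vec{F}\|_{\curl}^2 + \|q\|_{\grad}^2 \simeq \scalar{\preconW^{-1}(\vec{C},\vec{F},q)}{(\vec{C},\vec{F},q)}$ — and that $\prelow$ is also boundedly invertible in the same norms (its diagonal is SPD and spectrally equivalent to $\preconW^{-1}$, its off-diagonal blocks $\matK^T\massM_{\vec{B}}$, $\matG^T\massM_{\vec{E}}$ are bounded by $\|\curl\cdot\|$, $\|\grad\cdot\|$ which are controlled by the respective norms). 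Composing these bounds gives $\|\preconW_{\prelow}^{-1}\bigA\vec{x}\|_{\preconW_{\prelow}^{-1}} \geq \Lambda\|\vec{x}\|_{\preconW_{\prelow}^{-1}}$ with $\Lambda$ independent of $h,\tau,\varepsilon,\mu^{-1}$, since all intervening constants are the Babuška–Brezzi constants $1/4$ and $C$ from Theorem~\ref{thm:well-posed-aux}, the spectral-equivalence constants, and the fixed number $\rho$.

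The main obstacle will be the cross-term bookkeeping in the first inequality: tracking which off-diagonal contributions from $\prelow^{-1}$ survive after the natural cancellation with the skew part of $\bigA$, and verifying that the remaining ones can be absorbed with a quantitatively explicit constant that stays positive precisely under $\rho < \sqrt{3}-1$. In particular one must be careful that the inversion $(I - \prediag^{-1}\mathcal{N})^{-1}$ — equivalently, forward-substitution through the triangular system — does not reintroduce a term proportional to $\preQ_{\vec{E}}A_{\vec{E}} - I$ with an uncontrolled coefficient; the bound on $\rho$ is exactly the threshold that keeps the induced geometric-series-type remainder small enough. Everything else is a routine repetition of the coercivity computation already carried out for Theorem~\ref{thm:well-posed-aux}, with weighted mass matrices in place of $L^2$ inner products and the spectral-equivalence constants carried along.
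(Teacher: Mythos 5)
Your proposal follows essentially the same route as the paper: expand $\scalar{\preconW_{\prelow}\bigA\vec{x}}{\vec{x}}_{\preconW^{-1}}$ blockwise, let the skew cross terms cancel, bound the surviving cross terms by Cauchy--Schwarz together with the spectral equivalences \eqref{ine:spectral-Q_E}--\eqref{ine:spectral-Q_p}, and get the bound on $\Lambda$ from continuity of each term. One small correction: the threshold $\rho<\sqrt{3}-1$ does not arise from controlling a Neumann/geometric-series remainder of $(I-\prediag^{-1}\mathcal{N})^{-1}$ as you speculate at the end, but simply as the positive-definiteness condition for the explicit quadratic form in $\bigl(\|\vec{B}\|_{\dive},\|\vec{E}\|_{\curl},\|\grad\,p\|_{\preQ_{\vec{E}}}\bigr)$ with off-diagonal couplings $-1/2$ and $-(1+\rho)/2$, whose leading $3\times 3$ determinant is $\bigl(3-(1+\rho)^2\bigr)/4$.
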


\begin{proof}
By the definition of $\preconW_{\prelow}^{-1}$ and $\bigA$, we have
\begin{align*}
\scalar{\preconW_{\prelow}\bigA\vec{x}}{\vec{x}}_{\mathcal{W}^{-1}} & = \scalar{\vec{B}}{\vec{B}}_{A_{\vec{B}}} + \scalar{\frac{\tau}{2} \curl \; \vec{E}}{\vec{B}}_{A_{\vec{B}}} + \scalar{\vec{E}}{\vec{E}}_{A_{\vec{E}}} + \scalar{\vec{E}}{\grad \; p} \\
& \quad + \scalar{\preQ_{\vec{E}}A_{\vec{E}} \vec{E}}{\grad \; p} -  \scalar{\vec{E}}{\grad \; p} + \scalar{\preQ_p \grad \; p }{\grad \; p} + \scalar{\frac{\tau}{2} p}{p} \\
& \geq \| \vec{B} \|_{\dive}^2 - \| \vec{B} \|_{\dive} \sqrt{\frac{\tau}{2}} \| \curl \; \vec{E} \|_{\mu^{-1}} + \| \vec{E} \|_{\curl}^2 \\
& \quad - (1+\rho) \| \vec{E} \|_{\curl} \| \grad \; p \|_{\preQ_{\vec{E}}} + \| \grad \; p \|_{\preQ_{\vec{E}}}^2 + \frac{\tau}{2} \| p \|^2 \\
& \geq \| \vec{B} \|_{\dive}^2 - \| \vec{B} \|_{\dive} \| \vec{E} \|_{\curl} + \| \vec{E} \|_{\curl}^2 \\
& \quad - (1+\rho) \| \vec{E} \|_{\curl} \| \grad \; p \|_{\preQ_{\vec{E}}} + \| \grad \; p \|_{\preQ_{\vec{E}}}^2 + \frac{\tau}{2} \| p \|^2  \\
& \geq 
\begin{pmatrix}
\| \vec{B} \|_{\dive} \\
\| \vec{E} \|_{\curl} \\
\| \grad \; p \|_{\preQ_{\vec{E}}} \\
\sqrt{\frac{\tau}{2}}\| p \|
\end{pmatrix}^T 
\begin{pmatrix}
1 & -\frac{1}{2} & 0  &  0\\
-\frac{1}{2} &  1  &  -\frac{1+\rho}{2} & 0 \\
0 & - \frac{1+\rho}{2} & 1 & 0 \\
0 &  0  &  0  & 1
\end{pmatrix}
\begin{pmatrix}
\| \vec{B} \|_{\dive} \\
\| \vec{E} \|_{\curl} \\
\| \grad \; p \|_{\preQ_{\vec{E}}} \\
\sqrt{\frac{\tau}{2}}\| p \|
\end{pmatrix}.
\end{align*} 
It is easy verify that the matrix in the middle is SPD, when $0 \leq \rho < \sqrt{3}-1$.  Therefore, there exists a constant $\lambda_0$ such that,
\begin{align*}
\scalar{\preconW_{\prelow}\bigA\vec{x}}{\vec{x}}_{\mathcal{W}^{-1}} & \geq \lambda_0 \left( \| \vec{B} \|_{\dive}^2 + \| \vec{E} \|_{\curl} ^2 +  \| \grad \; p \|_{\preQ_{\vec{E}}}^2 +  \frac{\tau}{2} \| p \|^2 \right) \\
& \geq \lambda_0 \left( \| \vec{B} \|_{\dive}^2 + \| \vec{E} \|_{\curl} ^2 +  c_{2,\vec{E}}^{-1} \; \frac{2}{\tau} \| \grad \; p \|_{\varepsilon}^2 +  \frac{\tau}{2} \| p \|^2 \right) \\
& \geq \min \{ 1, (1-\rho), c_{2,\vec{E}}^{-1} c_{1, p}^{-1}, c_{1,p}^{-1}  \} \lambda_0 \scalar{\vec{x}}{\vec{x}}_{\mathcal{W}^{-1}},
\end{align*}
which gives the lower bound $\lambda :=  \min \{ 1, (1-\rho),
c_{2,\vec{E}}^{-1} c_{1, p}^{-1}, c_{1,p}^{-1}  \} \lambda_0$.  The upper bound, $\Lambda$, follows directly from the continuity of each term.
\end{proof}

The condition $ \| I_{\vec{E}} - \preQ_{\vec{E}} A_{\vec{E}}
\|_{A_{\vec{E}}} \leq \rho < \sqrt{3}-1$ means that we should solve
$A_{\vec{E}}$ to a certain accuracy in practice.  Regardless, the above
theorem implies that $\preconW_{\prelow}$ preconditioned GMRES converges uniformly with respect to the discretization and physical parameters.

\subsection{Block Preconditioner based on Exact Block Factorization}
Next, we consider linear solvers based on block factorization.  In
general, block factorization inevitably
involves systems with Schur complements, often built recursively if
the system involves more than two fields. Since exact Schur
complements are typically dense, traditional preconditioners based
on block factorization need approximations, and the performance of the
preconditioner strongly depends on the accuracy of these
approximations.   However, good approximations of the Schur complements are, in general, rather
challenging to design in practice.  In the case of
\eqref{discrete-mixed-form-B}-\eqref{discrete-mixed-form-p} , though, the
structure-preserving discretization allows for the Schur
complements to be computed exactly.  Specifically, the exactness property
of the sequence of discrete spaces yield sparse Schur complements that
are used directly without approximation. 

\subsubsection{Exact Block Facorization}
First, consider the mixed formulation,
\eqref{discrete-mixed-form-B}-\eqref{discrete-mixed-form-p}, more
precisely, its matrix form, \eqref{def:discrete-operator}.  Recall
that due to the
structure-preserving discretization, properties of the gradient and
curl operators (e.g., $\curl \ \grad = 0$) are carried over to the
discrete level (e.g., $\matK\matG= \vec{0}$ or, equivalently, $\matG^T
\matK^T = \vec{0}$).  Likewise $G^TZ = 0$.  Based on this, we have the following exact block factorization of \eqref{def:discrete-operator},
\begin{equation}  \label{eqn:LDU}
\bigA = \prelow \, \prediag \, \preupp,
\end{equation}
where
\begin{equation} \label{def:LDU}
\prelow = 
\begin{pmatrix}
I &  &  \\
- \frac{\tau}{2} \matK^T & I & \\
&	-\frac{\tau}{2} \matG^T & I
\end{pmatrix}, \
\prediag = 
\begin{pmatrix}
\frac{2}{\tau} \massM_{\bm{B}} &  &  \\
&	\schurS_{\vec{E}} & \\
&   &  \schurS_p
\end{pmatrix}, \
\preupp =
\begin{pmatrix}
I & \frac{\tau}{2} \matK & \\
&   I & \frac{\tau}{2} \matG \\
&	& I
\end{pmatrix},
\end{equation}
with the following Schur complements
\begin{align*}
\schurS_{\vec{E}} & =  \frac{\tau}{2} \matK^T \massM_{\bm{B}} \matK+ \frac{2}{\tau} \massM_{\bm{E}}  + \matZ, \\
\schurS_p & = \frac{\tau}{2} \matG^T \massM_p \matG+  \frac{2}{\tau} \massM_p.
\end{align*}
Again, we emphasize that, due to the structure-preserving
discretization, the Schur complements are computed exactly and are sparse. 

\subsubsection{Block Preconditioners}
Based on the above exact factorization, \eqref{eqn:LDU}, we design
several block preconditioners.  One simple choice is to use the
diagonal block, $\prediag^{-1}$.  Interestingly, such choice actually
leads to the preconditioner, $\widetilde{\preconW_{\prediag}}$,
\eqref{def:B_D}, derived from the well-posedness.  Of course,
computing the inverse of $\prediag$ involves inverting the Schur
complements, $\schurS_{\vec{E}}^{-1}$ and $\schurS_p^{-1}$, exactly, which is
expensive and infeasible in practice.  Therefore, we replace the Schur
complements by their spectral equivalent SPD approximations, which in
the diagonal case,
yields the block preconditioners in \eqref{def:M_D} (or \eqref{def:M_D-div-free} if we need to preserve the divergence-free property):  

\begin{align}
& c_{1, \vec{B}} \scalar{\preQ_{\vec{B}} \, \vec{B}}{\vec{B}} \leq \scalar{ \left(\frac{2}{\tau}\massM_{\bm{B}} \right)^{-1}\, \vec{B}}{\vec{B}} \leq c_{2, \vec{B}} \scalar{\preQ_{\vec{B}} \, \vec{B}}{\vec{B}}, \label{ine:Q_f} \\
& c_{1, \vec{E}} \scalar{\preQ_{\vec{E}} \, \vec{E}}{\vec{E}} \leq \scalar{ \schurS_{\vec{E}}^{-1}\, \vec{E}}{\vec{E}} \leq c_{2, \vec{E}} \scalar{\preQ_{\vec{E}} \, \vec{E}}{\vec{E}}, \label{ine:Q_e} \\
& c_{1, p} \scalar{\preQ_{p} \, p}{p} \leq \scalar{ \schurS_p^{-1}\, p}{p} \leq c_{2, p} \scalar{\preQ_{p} \, p}{p}. \label{ine:Q_v}
\end{align}
This implies that for $\preQ = \textrm{diag}\left(  \preQ_{\vec{B}}, \preQ_{\vec{E}}, \preQ_p \right)$, we have
\begin{equation*}
c_1 \scalar{\preQ \, \vec{x}}{\vec{x}} \leq \scalar{\prediag^{-1} \, \vec{x}}{\vec{x}} \leq c_2 \scalar{\preQ \, \vec{x}}{\vec{x}},
\end{equation*}
with $c_1 = \min \{ c_{1, \vec{B}}, c_{1,\vec{E}}, c_{1,p}  \}$ and
$c_2 = \max \{ c_{2,\vec{B}}, c_{2,\vec{E}}, c_{2,p} \}$.  Possible
choices of $\preQ_{\vec{B}}$, $\preQ_{\vec{E}}$, and $\preQ_p$ were
discussed in the previous section.  Again, we choose $\preQ_{\vec{B}} =  \left(\frac{2}{\tau}\massM_{\bm{B}} \right)^{-1}$ in order to preserve the divergence-free condition in the linear solver.  

Based on $\preQ$, though, we consider three other different block preconditioners,
\begin{equation} \label{def:block-prec}
\preconBF_{\prelow\prediag} := \preQ \prelow^{-1}, \quad \preconBF_{\prediag\preupp} := \preupp^{-1} \preQ, \quad \preconBF_{\prelow\prediag\preupp} := \preupp^{-1} \preQ \prelow^{-1}.
\end{equation}
Here, $\prelow^{-1}$ and $\preupp^{-1}$ can be computed exactly as follows
\begin{equation*}
\prelow^{-1} = 
\begin{pmatrix}
I &  &  \\
\frac{\tau}{2} \matK^T & I& \\
&	\frac{\tau}{2} \matG^T & I 
\end{pmatrix}, \quad
\preupp^{-1} = 
\begin{pmatrix}
I & -\frac{\tau}{2} \matK & \\
&   I & -\frac{\tau}{2} \matG \\
&	& I
\end{pmatrix}.
\end{equation*}

\begin{theorem} \label{thm:block-prec}
Let $\preconBF_{\prelow\prediag}$, $\preconBF_{\prediag\preupp}$, and $\preconBF_{\prelow\prediag\preupp}$ be defined by \eqref{def:block-prec} and assume the spectral-equivalent properties, \eqref{ine:Q_f}-\eqref{ine:Q_v}, hold, then,
\begin{equation}\label{def:spectral-block-prec}
\lambda( \preconBF_{\prelow\prediag} \bigA ) \in [C_1, C_2], \ \lambda( \preconBF_{\prediag\preupp} \bigA ) \in [C_1, C_2], \ \text{and} \ \lambda( \preconBF_{\prelow\prediag\preupp} \bigA ) \in [C_1, C_2],
\end{equation}
where $C_1 = \min\{ c_{2,\vec{B}}^{-1}, c_{2,\vec{E}}^{-2},
c_{2,p}^{-1} \}$ and $C_2 = \max\{ c_{1,\vec{B}}^{-1},
c_{1,\vec{E}}^{-1}, c_{1,p}^{-1} \}$ are constants that do not depend on neither the discretization parameters, $h$ and $\tau$, nor the physical parameters, $\varepsilon$ and $\mu^{-1}$.
\end{theorem}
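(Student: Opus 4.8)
The plan is to reduce everything to the exact block factorization $\bigA=\prelow\,\prediag\,\preupp$ of \eqref{eqn:LDU}, so that the unit block-triangular factors $\prelow$ and $\preupp$ cancel identically against the matching factors of $\bigA$ and only the block-diagonal part $\prediag$ (replaced by $\preQ$) is left to govern the spectrum. Using the explicit inverses $\prelow^{-1}$, $\preupp^{-1}$ together with \eqref{eqn:LDU},
\[
\preconBF_{\prelow\prediag}\,\bigA=\preQ\,\prelow^{-1}(\prelow\,\prediag\,\preupp)=\preQ\,\prediag\,\preupp,\quad
\preconBF_{\prediag\preupp}\,\bigA=\preupp^{-1}(\preQ\,\prelow\,\prediag)\,\preupp,\quad
\preconBF_{\prelow\prediag\preupp}\,\bigA=\preupp^{-1}(\preQ\,\prediag)\,\preupp .
\]
Hence $\preconBF_{\prediag\preupp}\bigA$ and $\preconBF_{\prelow\prediag\preupp}\bigA$ are similar (by conjugation with $\preupp$) to $\preQ\,\prelow\,\prediag$ and $\preQ\,\prediag$, while $\preconBF_{\prelow\prediag}\bigA=\preQ\,\prediag\,\preupp$ is itself block upper-triangular. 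Since $\prelow$ and $\preupp$ carry identity blocks on the diagonal, in all three cases the matrix in question is block upper-triangular, block lower-triangular, or block-diagonal, respectively, with the \emph{same} diagonal blocks
\[
\preQ_{\vec{B}}\Big(\tfrac{2}{\tau}\massM_{\vec{B}}\Big),\qquad \preQ_{\vec{E}}\,\schurS_{\vec{E}},\qquad \preQ_{p}\,\schurS_{p},
\]
so that $\sigma(\preconBF\,\bigA)=\sigma\big(\preQ_{\vec{B}}\tfrac{2}{\tau}\massM_{\vec{B}}\big)\cup\sigma(\preQ_{\vec{E}}\schurS_{\vec{E}})\cup\sigma(\preQ_{p}\schurS_{p})$ for each of $\preconBF_{\prelow\prediag}$, $\preconBF_{\prediag\preupp}$, $\preconBF_{\prelow\prediag\preupp}$.

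It then remains to localize the spectra of the three diagonal blocks. The matrices $\tfrac{2}{\tau}\massM_{\vec{B}}$, $\schurS_{\vec{E}}=\tfrac{\tau}{2}\matK^{T}\massM_{\vec{B}}\matK+\tfrac{2}{\tau}\massM_{\vec{E}}+\matZ$, and $\schurS_{p}$ are all SPD -- for $\schurS_{\vec{E}}$ because $\matZ\succeq 0$ (using $1+\gamma>0$) and $\tfrac{2}{\tau}\massM_{\vec{E}}\succ 0$ -- so each product $\preQ_{\bullet}C_{\bullet}$ of an SPD $\preQ_{\bullet}$ with an SPD $C_{\bullet}$ is similar to the SPD matrix $C_{\bullet}^{1/2}\preQ_{\bullet}C_{\bullet}^{1/2}$ and hence has real, positive spectrum; this also shows $\sigma(\preconBF\,\bigA)$ is real and positive, so the interval localization \eqref{def:spectral-block-prec} is meaningful even though $\preconBF\,\bigA$ is non-symmetric. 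For the quantitative bounds I would invert the Loewner-order inequalities \eqref{ine:Q_f}--\eqref{ine:Q_v}: for instance $c_{1,\vec{E}}\preQ_{\vec{E}}\preceq\schurS_{\vec{E}}^{-1}\preceq c_{2,\vec{E}}\preQ_{\vec{E}}$ gives $c_{2,\vec{E}}^{-1}\preQ_{\vec{E}}^{-1}\preceq\schurS_{\vec{E}}\preceq c_{1,\vec{E}}^{-1}\preQ_{\vec{E}}^{-1}$, and the Rayleigh-quotient identity $\mu=\scalar{\schurS_{\vec{E}}v}{v}/\scalar{\preQ_{\vec{E}}^{-1}v}{v}$ for an eigenpair $\preQ_{\vec{E}}\schurS_{\vec{E}}v=\mu v$ forces $\mu\in[c_{2,\vec{E}}^{-1},c_{1,\vec{E}}^{-1}]$; likewise $\sigma(\preQ_{\vec{B}}\tfrac{2}{\tau}\massM_{\vec{B}})\subseteq[c_{2,\vec{B}}^{-1},c_{1,\vec{B}}^{-1}]$ and $\sigma(\preQ_{p}\schurS_{p})\subseteq[c_{2,p}^{-1},c_{1,p}^{-1}]$. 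Taking the union of the three intervals and then the overall minimum and maximum yields \eqref{def:spectral-block-prec}, with every bound depending only on the spectral-equivalence constants (assumed parameter-uniform) and not on $h$, $\tau$, $\varepsilon$, or $\mu^{-1}$. As a consistency check, $\preQ=\prediag^{-1}$ recovers $\widetilde{\preconW}_{\prediag}$ of \eqref{def:B_D}, for which all the eigenvalues equal $1$.

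The step I expect to be the crux is the cancellation in the first paragraph: the factors $\prelow$ and $\preupp$ contain $\tfrac{\tau}{2}\matK$ and $\tfrac{\tau}{2}\matG$, whose norms are not uniform in $h$ and $\tau$, so any estimate that did not eliminate them exactly would bring in mesh- and parameter-dependent constants and destroy robustness. What makes the argument work is that the block factorization \eqref{eqn:LDU} is \emph{exact} -- a consequence of the structure-preserving identities $\matK\matG=\bm{0}$ and $\matG^{T}\matZ=\bm{0}$ -- so these factors drop out identically and the spectrum is controlled entirely by the three SPD diagonal blocks. The remaining ingredients (spectrum of a block-triangular matrix, similarity under $\preupp$, and inversion of the Loewner bounds) are routine.
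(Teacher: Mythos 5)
Your proof is correct and follows essentially the same route as the paper's: cancel the unit block-triangular factors against the exact factorization $\bigA=\prelow\,\prediag\,\preupp$, observe that the three preconditioned operators are block upper-triangular, similar to a block lower-triangular matrix, or similar to a block-diagonal matrix, all with diagonal blocks $\preQ_{\vec{B}}\bigl(\tfrac{2}{\tau}\massM_{\vec{B}}\bigr)$, $\preQ_{\vec{E}}\schurS_{\vec{E}}$, $\preQ_{p}\schurS_{p}$, and then bound the spectra of these blocks via the spectral-equivalence constants (your added details on positivity of the spectrum and the Rayleigh-quotient bounds are routine fill-ins the paper leaves implicit). One small remark: your derivation yields $c_{2,\vec{E}}^{-1}$ in the lower bound, which indicates that the exponent in the paper's $C_1=\min\{c_{2,\vec{B}}^{-1},c_{2,\vec{E}}^{-2},c_{2,p}^{-1}\}$ is a typo for $c_{2,\vec{E}}^{-1}$.
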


\begin{proof}
First consider $\preconBF_{\prelow\prediag} \bigA$, 
\begin{align*}
\preconBF_{\prelow\prediag} \bigA & = \preQ \prelow^{-1} \prelow \mathcal{D} \preupp = \preQ \mathcal{D} \preupp =
\begin{pmatrix}
\preQ_{\vec{B}} \left( \frac{2}{\tau} \massM_{\bm{B}} \right) & \preQ_{\vec{B}} \massM_{\bm{B}} \matK &  \\
&  \preQ_{\vec{E}} \schurS_{\vec{E}}  & \preQ_{\vec{E}} \massM_{\bm{B}} \matG \\
&  & \preQ_p \schurS_p
\end{pmatrix}.
\end{align*}
Since $\preconBF_{\prelow\prediag} \bigA$  is block upper triangular, its eigenvalues, $\lambda(\preconBF_{\prelow\prediag} \bigA)$, are determined by the eigenvalues of its diagonal blocks.  Then, using the spectral-equivalent properties, \eqref{ine:Q_f}-\eqref{ine:Q_v}, we have $\lambda( \preconBF_{\prelow\prediag} \bigA ) \in [C_1, C_2]$.

For the eigenvalues of $\preconBF_{\prediag\preupp} \bigA$, we consider the following generalized eigenvalue problem,
\begin{align*}
\bigA \vec{x} = \lambda \preconBF_{\prediag\preupp}^{-1} \vec{x} \ \Longleftrightarrow \ \prelow\prediag\preupp \vec{x} = \lambda \preQ^{-1} \preupp \vec{x} \ \Longleftrightarrow \ \preQ \prelow \mathcal{D} \vec{y} = \lambda \vec{y}, \ \text{where} \ \vec{y} = \preupp\vec{x}.
\end{align*}
Thus, the eigenvalues of $\preconBF_{\prediag\preupp} \bigA$ are also the eigenvalues of $\preQ\prelow\prediag$,
\begin{align*}
\preQ \prelow\prediag= 
\begin{pmatrix}
\preQ_{\vec{B}} \left( \frac{2}{\tau} \massM_{\bm{B}} \right)  &    &  \\
- \preQ_{\vec{E}} \matK^T \massM_{\bm{B}} & \preQ_{\vec{E}} \schurS_{\vec{E}} &  \\
  &  - \preQ_p \matG^T \massM_{\bm{E}} & \preQ_p \schurS_p 
  \end{pmatrix}.
\end{align*}
This is a block lower triangular matrix, and the eigenvalues are again
determined by the eigenvalues of its diagonal blocks.  Therefore, using \eqref{ine:Q_f}-\eqref{ine:Q_v}, $\lambda( \preconBF_{\prediag\preupp} \bigA ) \in [C_1, C_2]$.  

Finally, we consider $\preconBF_{\prelow\prediag\preupp}$ using the following generalized eigenvalue problem,
\begin{equation*}
\bigA\vec{x}  = \lambda \preconBF_{\prelow\prediag\preupp}^{-1} \vec{x} \ \Longleftrightarrow \ \mathcal{\prelow\prediag\preupp} \vec{x} = \lambda \prelow \preQ^{-1} \preupp \vec{x}   \ \Longleftrightarrow \  \preQ \mathcal{D} \vec{y} = \lambda \vec{y}, \ \text{where} \ \vec{y} = \preupp\vec{x}.
\end{equation*}
Then, the eigenvalues of $\preconBF_{\prelow\prediag\preupp} \bigA$
are also the eigenvalues of $\preQ\mathcal{D}$.  Since
$\preQ\mathcal{D} = \textrm{diag} ( \preQ_{\vec{B}} \left(
  \frac{2}{\tau} \massM_{\bm{B}} \right),  \preQ_{\vec{E}}
\schurS_{\vec{E}},  \preQ_p \schurS_p )$, we again conclude that $\lambda( \preconBF_{\prelow\prediag\preupp} \bigA ) \in [C_1, C_2]$.  
\end{proof}

As before, using $\preQ_{\vec{B}}$ may destroy the divergence-free
property of our discretization.  Therefore, we use $\preQ_{\bm{B}} =
\left(  \frac{\tau}{2} \massM_{\bm{B}} \right)^{-1}$ to guarantee that
the resulting preconditoned GMRES approach preserves the divergence of
$\mathcal{B}$ at each iteration.

\section{Numerical Experiments} \label{sec:numerics}
Several numerical tests are done by solving system
\eqref{eq:BcE0}-\eqref{eq:divB0} using the Crank-Nicolson time
discretization and the structure-preserving space discretization
described in Section \ref{sec:fem}.  We use a test problem described
in \cite{2011ColombiniF_PetkovV_RauchJ-aa}, for which it was shown in
\cite{2013AdlerJ_PetkovV_ZikatanovL-aa} that the given discretization
accurately resolves the solution which decays exponentially in time
and space.  Here, we focus on the robustness and efficiency of the
linear solvers proposed in the previous sections.

For the computational domain, we take the area between a polyhedral
approximation of the sphere of radius $1$, and a polyhedral
approximation of a sphere of radius $4$ (see Figure \ref{fig:domain}).
The inner sphere represents the obstacle, with an impedance boundary,
and the outer sphere is considered far enough away that a Dirichlet
(perfect conductor) boundary condition is used.  In other words, we
prescribe $\vec{E}\wedge n = 0$, $\vec{B}\cdot n = 0$, and $p=0$ on
the outer sphere.  The exact solution (taken
from~\cite[Theorem~3.2]{2011ColombiniF_PetkovV_RauchJ-aa}) is given as
follows:
\begin{eqnarray}
\vec{E}_* & =& \frac{e^{r\left (|\vec{x}| + t\right )}}{|\vec{x}|^2}\left ( r^2 -
  \frac{r}{|\vec{x}|}\right ) \left (\begin{array}{c}0\\z\\-y\\\end{array}
\right ),\label{Einit}\\
\vec{B}_* &=& e^{r\left (|\vec{x}| + t\right )} \left [ \frac{1}{|\vec{x}|^3}\left ( r^2 -
  \frac{3r}{|\vec{x}|} + \frac{3}{|\vec{x}|
^2}\right ) \left (\begin{array}{c}z^2+y^2\\-xy\\-xz\\\end{array}
\right ) +
\left ( \begin{array}{c}\frac{2r}{|\vec{x}|} -
    \frac{2}{|\vec{x}|^2}\\0\\0\\\end{array} \right ) \right
],\label{Binit}\\
p_* &=& 0,
\end{eqnarray}
where
$r = 1/2\left ( 1-\sqrt{1+4/\gamma}\right )$ for various values of $\gamma$.
For the initial conditions, we use piecewise polynomial interpolants
of the exponentially-decaying solutions given in equations
\eqref{Einit}-\eqref{Binit} at $t=0$.  Further corrections of
$\vec{E}_0$ are needed to make it orthogonal to the gradients of
functions in $H_{0,h}(\grad)$ and also to the gradients of the
discrete harmonic form.  We refer to
\cite{2013AdlerJ_PetkovV_ZikatanovL-aa} for details.  Finally, for the
tests below, we take $\gamma = 0.05$ ($r = -4$). Four different mesh are used in order to test the robustness of the preconditioners with respect to the mesh size and the detailed information about the meshes can be find in Table \ref{tab:mesh}. Numerical experiments
are done using a workstation with an 8-Core 3GHz Intel Xeon
`Sandy Bridge' CPU and 256 GB of RAM.  The software used is a
finite-element and multigrid package written by the authors.

\begin{figure}[h!]
\begin{center}
\includegraphics[scale=0.25]{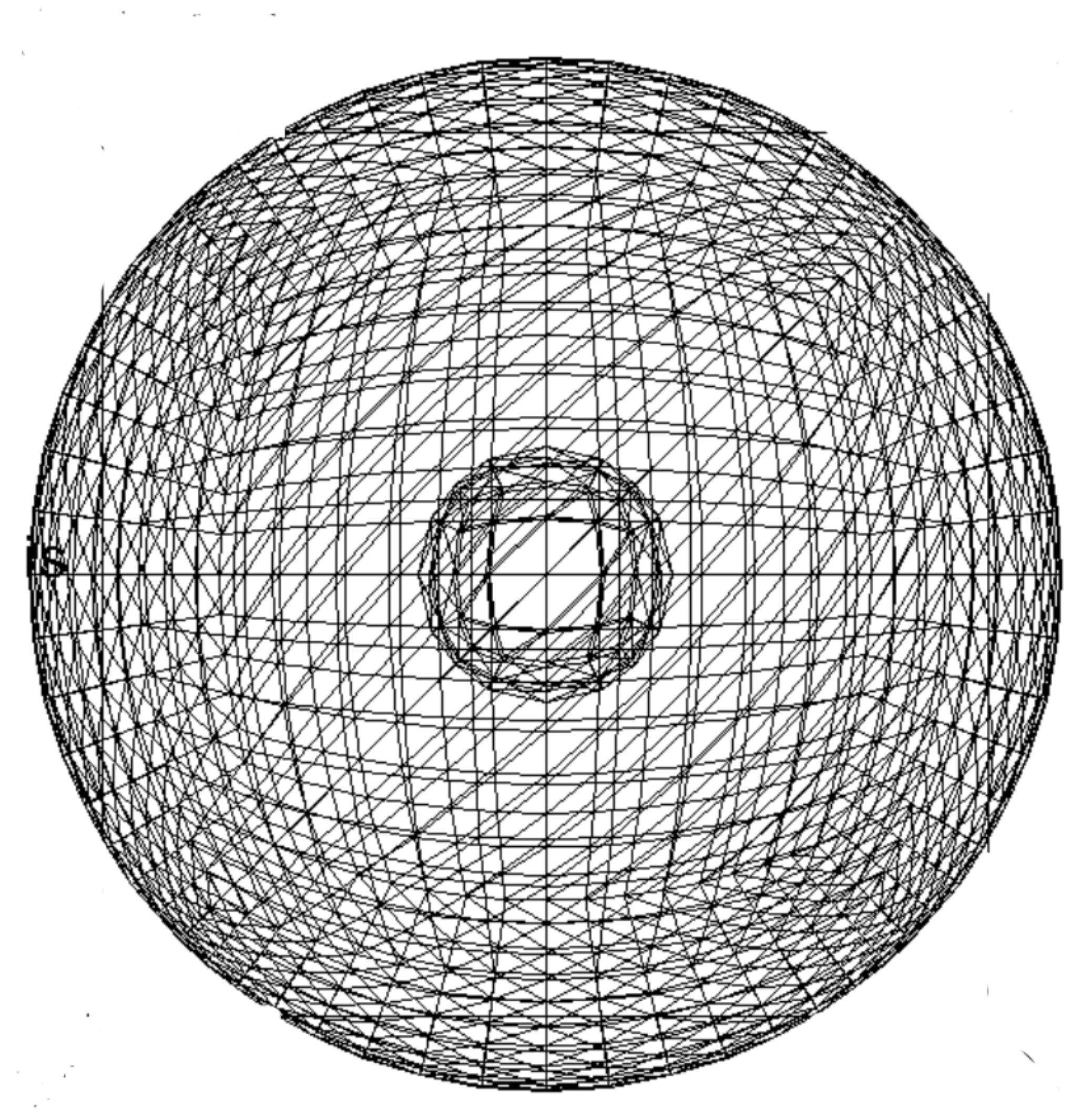}
\caption{Computational domain of the numerical tests}
\label{fig:domain}
\end{center}
\end{figure}

\begin{table}[h!]
\caption{Information of Meshes}
\begin{center}
\begin{tabular}{|c||c c c c|}
\hline 
 		&  Vertices  & Edges  	&  Faces  		& DoF \\ \hline
 Mesh 1    &  602  	  &   3,210   	&   4,812  		& 8,624  \\ 
 Mesh 2    &  3,681  	  &   21,736   	&   34,482  	& 59,899  \\ 
 Mesh 3    &  27,005    &   171,748   	&   282,962  	& 481,715   \\ 
 Mesh 4    &  228,412  &  1,525,390  	&   2,567,848  	& 4,321,650  \\ 
 \hline 
\end{tabular}
\end{center}
\label{tab:mesh}
\end{table}%

First, we consider the block preconditioners based on well-posedness:
the block diagonal preconditioner, $\preconW_{\prediag}$
\eqref{def:M_D-div-free}; the block lower triangular preconditioner,
$\preconW_\prelow$ \eqref{def:M_L-div-free}; and the block upper
triangular preconditioner, $\preconW_\preupp$
\eqref{def:M_U-div-free}.  The diagonal blocks are solved inexactly by
the preconditioned GMRES method with a tolerance of $10^{-2}$, in order to
make sure that the spectral-equivalent properties, \eqref{ine:spectral-Q_E}
and \eqref{ine:spectral-Q_p}, are satisfied.  This tolerance is
sufficient to meet the conditions in the proof of
Theorem \ref{thm:wp}.  Since the preconditioners are
actually changing at each iteration, we use flexible GMRES (FGMRES) in
the implementation with a relative residual stopping criteria of $10^{-8}$.  Table~\ref{tab:W-krylov} shows the number
iterations of the preconditioned FGMRES method with the three
different block preconditioners.  In these tests, we fix
$\varepsilon = \mu^{-1} = 1$ and investigate the robustness of the
proposed preconditioners with respect to the time step size, $\tau$, and mesh size.  The iteration counts shown in Table~\ref{tab:W-krylov} are recorded at the second time step, though
  the iterations for other time steps are similar. Based on the results, we see that the block
preconditioners are effective and robust with respect to these
parameters.

\begin{table}[h!]
\caption{Iteration counts for the block preconditioners based on
  well-posedness.  (left) Block Diagonal, $\preconW_\prediag$
  \eqref{def:M_D-div-free}. (center) Block Lower Triangular, $\preconW_\prelow$
  \eqref{def:M_L-div-free}.\\ (right) Block Upper Triangular, $\preconW_\preupp$
  \eqref{def:M_U-div-free}.  Diagonal blocks are solved inexactly.}
\begin{center}
\begin{tabular}{|l || c c c c|}
		\hline 
&\multicolumn{4}{ |c| }{$\preconW_\prediag$}\\ \hline 
		 \backslashbox{$\tau$}{Mesh} & 1 & 2 & 3 & 4
		  \\ 
		\hline 
		$0.2$ & 21 & 26 & 27 & 28  \\
		$0.1$ & 14 & 20 & 25  & 27  \\
		$0.05$ & 10 & 14 & 25  & 24 \\
		$0.025$ & 7 & 9 & 14 & 20 \\
		\hline 
		\end{tabular}
\begin{tabular}{|c c c c|}
		\hline
\multicolumn{4}{ |c| }{$\preconW_\prelow$}\\ \hline
		 1 \hspace{-65pt}\phantom{\backslashbox{$\tau$}{Mesh}}& 2 & 3  & 4
		  \\ 
		\hline 
		7 & 8 & 8 & 9 \\
		6 & 7 & 7 & 8 \\
		5 & 5 & 6 & 7  \\
		4 & 5  & 5  & 6 \\
		\hline 
		\end{tabular}
\begin{tabular}{|c c c c|}
		\hline
\multicolumn{4}{ |c| }{$\preconW_\preupp$}\\ \hline
		  1 \hspace{-65pt}\phantom{\backslashbox{$\tau$}{Mesh}} & 2 & 3  & 4
		  \\ 
		\hline 
		7 & 8 & 8 & 9  \\
		6 & 7 & 8 & 8  \\
		5 & 6 & 6 & 8 \\
		5 & 5 & 6 & 6 \\
		\hline 
		\end{tabular}
\end{center}
\label{tab:W-krylov}
\end{table}%

Next, we consider the block preconditioners based on exact block
factorization, namely, the block lower triangular preconditioner,
$\preconBF_{\prelow\prediag}$, the block upper triangular
preconditioner, $\preconBF_{\prediag\preupp}$, and the symmetric
preconditioner, $\preconBF_{\prelow\prediag\preupp}$, all defined in
\eqref{def:block-prec}.  The diagonal blocks are also solved inexactly
by preconditioned GMRES with a relative residual reduction set at
$10^{-2}$.  As before,
the outer FGMRES iterations are terminated when the value of the norm of the 
relative residual goes below 
$10^{-8}$.
Table \ref{tab:BF-krylov} shows
the number of iterations of preconditioned FGMRES with the three
different block preconditioners.  In these tests, we again fix
$\varepsilon = \mu^{-1} = 1$ and see that the block
preconditioners based on exact block factorization are effective and
robust with respect to $\tau$ and mesh size.

\begin{table}[h!]
\caption{Iteration counts for the block preconditioners based on
  block factorization.  (left) Block Lower Triangular,
  $\preconBF_{\prelow\prediag}$. (center) Block Upper Triangular,
  $\preconBF_{\prediag\preupp}$. (right) Symmetric, $\preconBF_{\prelow\prediag\preupp}$.  Diagonal blocks are solved inexactly.}
\begin{center}
\begin{tabular}{|l || c c c c|}
		\hline
&\multicolumn{4}{ |c| }{$\preconBF_{\prelow\prediag}$}\\ \hline
		 \backslashbox{$\tau$}{Mesh} & 1 & 2 & 3  & 4
		  \\ 
		\hline 
		$0.2$ & 5 & 6 & 6 & 6  \\
		$0.1$ & 5 & 5 & 6 & 5 \\
		$0.05$ & 5 & 5 & 5 & 6 \\
		$0.025$ & 4 & 5  & 5 & 5 \\
		\hline 
		\end{tabular}
\begin{tabular}{|c c c c|}
		\hline
\multicolumn{4}{ |c| }{$\preconBF_{\prediag\preupp}$}\\ \hline
		 1 \hspace{-65pt}\phantom{\backslashbox{$\tau$}{Mesh}}& 2 & 3  & 4
		  \\ 
		\hline 
		6 & 6 & 6 & 7 \\
		5 & 5 & 6 & 7 \\
		5 & 5 & 6 & 6 \\
		5 & 5 & 5 & 6  \\
		\hline 
		\end{tabular}
\begin{tabular}{|c c c c|}
		\hline
\multicolumn{4}{ |c| }{$\preconBF_{\prelow\prediag\preupp}$}\\ \hline
		  1 \hspace{-65pt}\phantom{\backslashbox{$\tau$}{Mesh}} & 2 & 3  & 4
		  \\ 
		\hline 
		4 & 4 & 4 & 5  \\
		4 & 4 & 4 & 4 \\
		4 & 4 & 4 & 4 \\
		4 & 4 & 4 & 4  \\
		\hline 
		\end{tabular}
\end{center}
\label{tab:BF-krylov}
\end{table}%

Finally, we investigate the robustness of the proposed block
preconditioners with respect to the physical parameters, $\varepsilon$
and $\mu$.  We fix the mesh size (Mesh 3 is used in all the following tests) and time step
size, $\tau = 0.1$, and consider jumps in $\varepsilon$ and $\mu$.
The tolerance of the inner GMRES iterations for solving each diagonal
block remains $10^{-2}$ for relative residual reduction and the outer FGMRES iterations are terminated
when the relative residual has norm smaller than $10^{-8}$. As before, 
the iterations count are for the second time step, with other time steps obtaining similar values. 

Table \ref{tab:jump-varepsilon} reports the number of iterations when
there is jump in $\varepsilon$, but $\mu^{-1}$ is fixed to be $1$.
The jump is chosen so that $\varepsilon = 1$ in the spherical annulus
between radius $1$ and $2$, as well as between radius $3$ and $4$.
The jump appears between radius $2$ and $3$ and ranges from $10^{-6}$
to $10^6$.  The results confirm that the proposed precondtioners are
robust with respect to jumps in $\varepsilon$.

\begin{table}[h!]
\caption{Iteration counts for test problem using Mesh 3 with $\tau = 0.1$, $\mu^{-1} = 1$, and jumps in $\varepsilon$.}
\begin{center}
\begin{tabular}{|c||c c c c c c c|}
\hline 
			   & $10^{-6}$ & $10^{-4}$ & $10^{-2}$ & $1$ & $10^2$ & $10^4$ & $10^6$ \\ \hline
$\preconW_{\prediag}$                          & 	    28     &   28  	& 	27 	   & 25   &     27     &    21   & 16  \\
$\preconW_\prelow$ 			    & 	    9       &   9  	& 	8 	   & 7     &      7      &    9     & 8  \\
$\preconW_\preupp$ 			    & 	    9       &   9  	& 	8 	   & 8     &      7      &    6     & 6  \\ \hline
$\preconBF_{\prelow\prediag}$              & 	     7      &    8	& 	7	   & 6    &       6      &    8     &  8  \\
$\preconBF_{\prediag\preupp}$             & 	     7      &    7	& 	6 	   & 6     &      6      &    5     & 5  \\
$\preconBF_{\prelow\prediag\preupp}$  &   4         &    4	& 	4	   & 4     &      4      &    4     & 4  \\
\hline 
\end{tabular}
\end{center}
\label{tab:jump-varepsilon}
\end{table}%

Table \ref{tab:jump-mu} reports similar results for jumps in
$\mu^{-1}$, but with $\varepsilon$ is fixed to be $1$.  Similarly to
the previous case, the jump appears between radius $2$ and $3$ and
ranges from $10^{-6}$ to $10^6$.  Outside this region, $\mu^{-1} =
1$. The results show that the proposed precondtioners are also robust with respect to jumps in $\mu^{-1}$.

\begin{table}[h!]
\caption{Iteration counts for test problem using Mesh 3 with $\tau = 0.1$, $\varepsilon = 1$, and jumps in $\mu^{-1}$.}
\begin{center}
\begin{tabular}{|c||c c c c c c c|}
\hline 
			   & $10^{-6}$ & $10^{-4}$ & $10^{-2}$ & $1$ & $10^2$ & $10^4$ & $10^6$ \\ \hline
$\preconW_\prediag$ 			    & 	 17    &   22  	& 	27 	   & 25   &      25    &   25   & 25  \\
$\preconW_\prelow$ 			    & 	 10    &   10  	& 	9 	   & 7     &      7      &   7     & 7  \\
$\preconW_\preupp$ 			    & 	 9    &      9       & 	8 	   & 8     &      8      &   8     & 8  \\ \hline
$\preconBF_{\prelow\prediag}$ 	    & 	 9      &    9	& 	8	   & 6     &      6      &   6     & 6  \\
$\preconBF_{\prediag\preupp}$ 	    & 	 6      &    6	& 	6 	   & 6     &      6      &   6     & 6  \\
$\preconBF_{\prelow\prediag\preupp}$ &   5      &    5	& 	4	   & 4     &      4      &   4     & 4  \\
\hline 
\end{tabular}
\end{center}
\label{tab:jump-mu}
\end{table}%

Analyzing the results in Tables \ref{tab:W-krylov}--\ref{tab:jump-mu}, we
see that the block preconditoners based on exact block factorization
perform slightly better than the block preconditioners based on
well-posedness in terms of iteration count.  The dominant cost in computing the action
of each of these
preconditioners, however, is in approximately solving the
diagonal blocks.  Since such components are present in all of the
preconditioners tested, the overall computational work of applying
each of them is
similar.  Figure \ref{fig:timings} confirms this result when comparing
the timing to completely solve the system over 20 time steps on the
finest grid, Mesh 4, with $\tau = 0.1$ (again assuming $\varepsilon=\mu^{-1}=1$).
Overall, using $\preconBF_{\prelow\prediag\preupp}$
yields the most efficient results.

\begin{figure}[h!]
\centering
\includegraphics[scale=0.3]{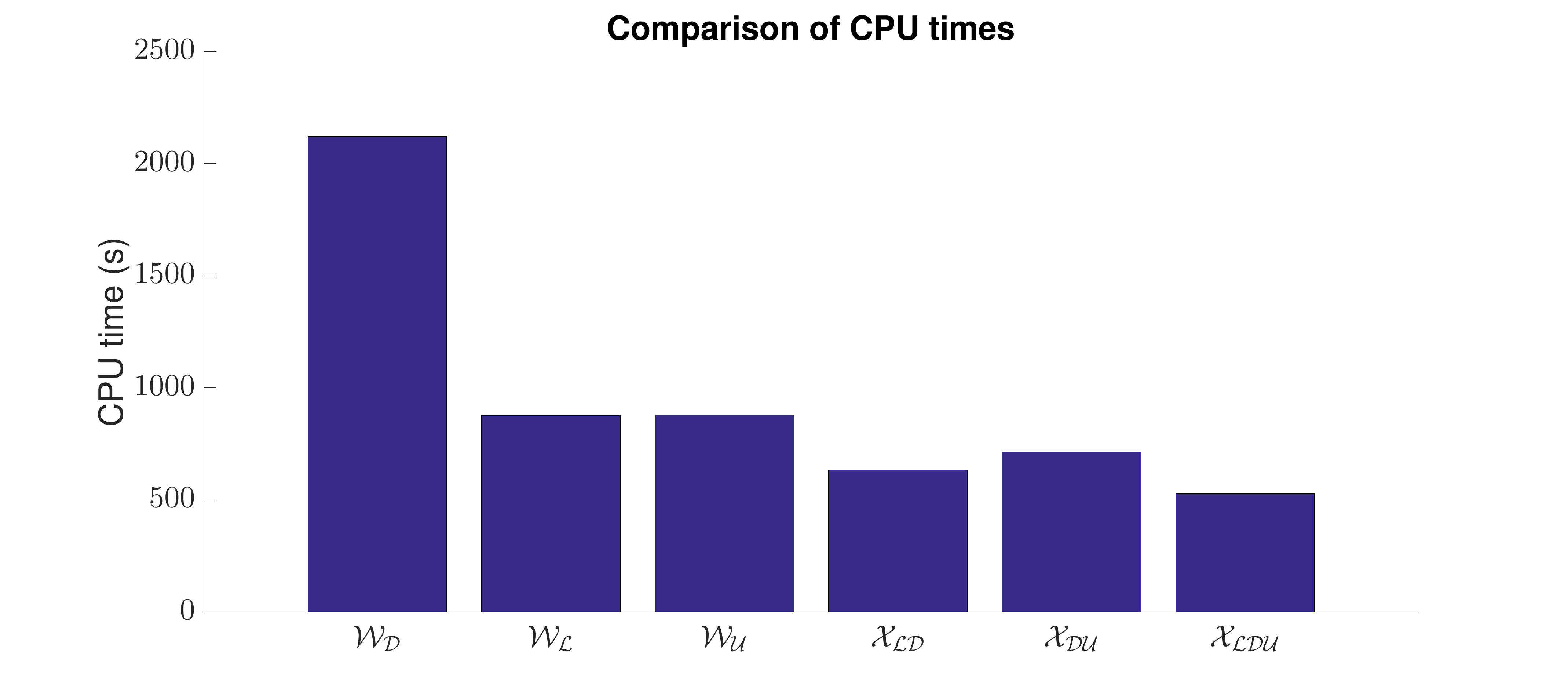}
\caption{Comparison of CPU times using the six different block
  preconditioners for the full simulation of
  \eqref{eq:BcE0}--\eqref{eq:boundary-condition}.  In all runs, $\tau
  = 0.1$, $\varepsilon=\mu^{-1}=1$, and we solve on Mesh 4.}
\label{fig:timings}
\end{figure}

\section{Conclusions}\label{sec:conclude}
In \cite{2013AdlerJ_PetkovV_ZikatanovL-aa}, it was shown that a
structure-preserving discretization of the full time-dependent
Maxwell's equations is capable of resolving the numerical approximation of ADS.
Here, we show that the resulting linear systems are also solved
efficiently.  Block preconditioners for GMRES based on either the well-posedness
of the discretization or on a block factorization approach yield
linear solvers that are robust with respect to simulation parameters,
including time step size and mesh size, as well as the physical
parameters of the problem.  In the process, we have additionally shown
the well-posedness of the structure-preserving discretization and how
to preserve the divergence-free constraint for the magnetic field
within the linear solver itself.  

Such block preconditioners can be applied to other systems, including
those discretized with high-order finite elements which are part of a
deRham complex. Future work involves extending these results to other
applications for which exponentially-decaying solutions exist.  By
using symplectic time integration and structure-preserving
discretizations, we will apply the ideas developed here to build block
preconditioners that will efficiently solve for the solutions that
preserve important physical properties.

\

\textbf{Acknowledgements.}
Ludmil Zikatanov gratefully acknowledges the support for this work from the Department of Mathematics at Tufts University.

\bibliographystyle{plain}
\bibliography{bib_2015_NSF,bib_symplectic}

\end{document}